\documentclass[11pt,thmsa]{article}%
\usepackage{amsmath}
\usepackage{amsfonts}
\usepackage{amssymb}
\usepackage{graphicx}%
\topmargin -0.8cm
\textwidth 14.5cm
\textheight 22cm
\newtheorem{theorem}{Theorem}[section]

\newtheorem{corollary}[theorem]{Corollary}

\newtheorem{lemma}[theorem]{Lemma}

\newtheorem{problem}[theorem]{Problem}
\newtheorem{proposition}[theorem]{Proposition}

\newenvironment{proof}[1][Proof]{\noindent\textbf{#1.} }{\ \rule{0.5em}{0.5em}}

\begin{document}
\title{Even dimensional homogeneous Finsler spaces with positive flag curvature\thanks{Supported by NSFC (no. 11271216, 11271198, 11221091), State Scholarship Fund of CSC (no. 201408120020), SRFDP of China, Science and Technology Development Fund for Universities and Colleges in Tianjin (no. 20141005), and Doctor fund of Tianjin Normal University (no. 52XB1305), }}
\author{Ming Xu$^1$, Shaoqiang Deng$^2$, Libing Huang$^2$\thanks{Corresponding author. E-mail: Huanglb@nankai.edu.cn} and Zhiguang Hu$^1$\\
\\
$^1$College of Mathematics\\
Tianjin Normal University\\
 Tianjin 300387, P.R. China\\
 \\
$^2$School of Mathematical Sciences and LPMC\\
Nankai University\\
Tianjin 300071, P.R. China}
\date{}
\maketitle

\begin{abstract}
In this paper, we use the technique of Finslerian submersion to deduce a flag curvature formula for homogeneous Finsler spaces. Based on this  formula, we give a complete classification of even-dimensional smooth
coset spaces $G/H$ admitting
$G$-invariant Finsler metrics with positive flag curvature. It turns out that the classification list  coincides with that of the even dimensional homogeneous Riemannian  manifolds with positive sectional curvature obtained by N.R. Wallach. We also find out all the coset spaces admitting invariant non-Riemannian Finsler metrics with positive flag curvature.

\textbf{Mathematics Subject Classification (2000)}: 22E46, 53C30.

\textbf{Key words}: Homogeneous manifolds, Finsler metrcis, flag curvature.

\end{abstract}

\section{Introduction}
One of the central problems in Riemannian geometry is to classify compact smooth manifolds admitting
Riemannian metrics of positive sectional curvature, Ricci curvature or scalar curvature.
For Ricci or scalar curvature, the study on this problem is very fruitful. However, for sectional curvature
it is extremely involved.
In fact, up to now  only  homogeneous manifolds $G/H$ which admit  $G$-invariant Riemannian metrics with positive sectional curvature have been completely classified, up to local isometries; see \cite{AW75, BB76, Ber61, Wallach1972, XuWolf2015}. In the inhomogeneous case,  only a few examples and possible candidates have been found among
compact biquotients and cohomogeneity one spaces; see for example
\cite{BA96, Der11, ES82, GWZ08, GVZ12}.

In  Finsler geometry, in particular in the homogeneous settings, the corresponding problem can be stated as  the following.
\begin{problem} \label{main-problem}
Classify all smooth coset spaces admitting invariant Finsler metrics of positive flag curvature, up to local isometries.
\end{problem}
For simplicity, we say that a homogeneous space is {\it positively curved} if it admits a homogeneous Finsler metric with positive flag curvature,
or it has already been endowed with such a metric. By the theorem of Bonnet-Myers for Finsler spaces, a positively curved homogeneous space must be compact.

Problem \ref{main-problem}
possesses the same importance as the one in  Riemannian geometry and it seems more difficult.
Nevertheless,  significant progress has been made in some special cases. For example,
the second and the fourth authors of this paper proved in \cite{HuDeng2013} that if $G$ is a connected simply connected
Lie group which admits a left invariant positively curved Finsler metric, then it must be isomorphic to $\mathrm{SU}(2)$. Moreover,  they found the same rank inequality for positively curved homogeneous Finsler space as in  Riemannian geometry. They also presented a complete classification of homogeneous Randers metrics with positive flag curvature and zero S-curvature in \cite{HD11}, which was later generalized by the first and the second authors to homogeneous $(\alpha,\beta)$-metrics \cite{XuDeng2015}.

Equally significant achievement has been made while the first and the second author classified positively curved normal homogeneous Finsler spaces \cite{XuDeng2015-02}, generalizing the work \cite{Ber61} of M.~Berger. In \cite{XuDeng2015-02}, the authors set up a general theme for the study of Problem \ref{main-problem}, and established some useful algebraic techniques by  introducing the method of Finslerian submersion to the study of homogeneous Finsler spaces.

The purpose of this paper is to study Problem \ref{main-problem} in the even
dimensional case. To reduce Problem \ref{main-problem} to a purely algebraic problem, a flag
curvature formula for homogeneous Finsler spaces is needed. The third author
of this work has set up a systematical machinery to rewrite curvatures of
homogeneous Finsler spaces with invariant frames \cite{Huang2013}. In this work, we will
provide a more fundamental technique,
using Finslerian submersion to deduce the
wanted flag curvature formula.


The main result of this paper is the following.

\begin{theorem}\label{thm1}
 Let $G$ be a compact connected simply connected Lie group and $H$  a connected closed subgroup such that the dimension of the coset space $G/H$ is even.
Suppose that there exists a $G$-invariant Finsler metric  on $G/H$  with positive flag curvature. Then
there exists a $G$-invariant Riemannian metric on $G/H$ with positive sectional curvature.
\end{theorem}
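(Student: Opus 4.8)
The plan is to reproduce, in the Finsler setting, Wallach's classification of even-dimensional homogeneous Riemannian manifolds of positive sectional curvature \cite{Wallach1972}, with the flag curvature formula deduced in the previous section via Finslerian submersion playing the role of the Riemannian curvature tensor. Throughout, the Finsler version of the Bonnet--Myers theorem makes $G/H$ compact, and we may pass to an almost effective presentation by dividing $G$ and $H$ by the ineffective kernel (a normal subgroup of $G$ contained in $H$); since a metric invariant under the quotient group remains $G$-invariant, it suffices to produce a positively curved invariant Riemannian metric for the resulting pair.

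The first step is to prove $\mathrm{rank}\,G=\mathrm{rank}\,H$. If this fails, a maximal torus $T\subseteq G$ has larger dimension than any maximal torus of $H$, so $T$ is not conjugate into $H$; hence a generic one-parameter subgroup of $T$ acts on $G/H$ without fixed points, that is, the corresponding Killing field of the Finsler metric is nowhere zero. This contradicts the Finsler analogue of Berger's fixed point theorem --- on an even-dimensional positively curved Finsler space every Killing field vanishes somewhere --- which is proved by the same second variation reasoning as in the Riemannian case and is in the spirit of the rank estimates of \cite{HuDeng2013}. Fixing then a maximal torus $T\subseteq H\subseteq G$ with Lie algebra $\mathfrak t$, a bi-invariant inner product on $\mathfrak g$, and the associated reductive decomposition $\mathfrak g=\mathfrak h\oplus\mathfrak m$, the isotropy representation of $H$ on $\mathfrak m$ is encoded by the roots in $R(\mathfrak g,\mathfrak t)\setminus R(\mathfrak h,\mathfrak t)$, each pair $\pm\alpha$ contributing a $2$-dimensional real weight space; and every $G$-invariant Finsler metric on $G/H$ corresponds to an $\mathrm{Ad}(H)$-invariant Minkowski norm on $\mathfrak m$.

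Next I would feed this into the flag curvature formula, translating positivity of the flag curvature --- evaluated at a generic flagpole $y\in\mathfrak m$ and on the $2$-planes spanned by $y$ together with the various real weight vectors --- into the same root-theoretic constraints Wallach extracts in the Riemannian case: the isotropy representation has few inequivalent irreducible summands, their highest weights are tightly restricted, and whenever $H\subsetneq K\subsetneq G$ is an intermediate subgroup the fibration $K/H\hookrightarrow G/H\to G/K$ forces $\dim K/H$ and $\dim G/K$ into Wallach's admissible range. Running the ensuing case-by-case analysis of equal rank pairs $(\mathfrak g,\mathfrak h)$ exactly as in \cite{Wallach1972} leaves, after taking coverings and the simple connectivity of $G$ into account, only the rank one symmetric pairs together with $(\mathrm{SU}(3),T^2)$, $(\mathrm{Sp}(3),\mathrm{Sp}(1)^3)$ and $(F_4,\mathrm{Spin}(8))$. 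Each of these carries a classically known $G$-invariant Riemannian metric of positive sectional curvature --- the normal metric in the symmetric case, the Wallach metrics in the remaining three --- which is the desired conclusion.

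The principal obstacle is this translation step. In the Riemannian case one works with a genuine curvature $4$-tensor and may polarize freely; here the curvature object depends on the flagpole, so the flagpoles must be chosen judiciously, and --- crucially --- the flag curvature formula carries extra terms relative to the Riemannian sectional curvature, coming from the Cartan tensor of $F$ and from the non-quadratic part of the geodesic spray. The technical heart of the proof is to show that these additional terms are sign-controlled on the planes that matter, so that positivity of the flag curvature is no weaker a requirement than positivity of the sectional curvature of the osculating Riemannian metrics; granted this, the Finsler problem collapses onto Wallach's. A secondary point needing care is the Finsler fixed point theorem invoked above, together with the bookkeeping that keeps the effectiveness and covering reductions compatible with $G$ being compact, connected and simply connected.
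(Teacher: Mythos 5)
Your overall strategy (rank equality, root space decomposition, extraction of Wallach-type root conditions from positive flag curvature, then quoting the known positively curved Riemannian metrics on the resulting list) is the right skeleton, but the step you yourself flag as ``the principal obstacle'' is precisely the content of the proof, and your proposed way of handling it would not work as stated. You ask to show that the extra Finsler terms (Cartan tensor, non-quadratic spray) are ``sign-controlled'' so that positive flag curvature is no weaker than positive sectional curvature of the osculating metrics; no such general comparison is proved or needed, and it is not how the argument goes. The actual mechanism is to choose the flagpole $u$ inside a single root plane $\mathfrak{g}_{\pm\alpha}\subset\mathfrak{m}$: invariance of $\langle\cdot,\cdot\rangle^F_u$ under the subtorus $T'$ with Lie algebra $\ker\alpha$ forces $\langle u,\mathfrak{g}_{\pm\gamma}\rangle^F_u=0$ for all $\gamma\neq\pm\alpha$, hence the criticality condition $\langle[u,\mathfrak{m}]_\mathfrak{m},u\rangle^F_u=0$. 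Under this condition the Killing field generated by $u$ is a geodesic field at $o$, and the refinement of Shen's theorem together with the Finslerian submersion from $(G,\bar F)$ yields, for any $v$ commuting with $u$, the exact formula $K^F(o,u,u\wedge v)=\|U(u,v)\|^2_u/(\ldots)$ with no Cartan-tensor correction at all (Theorem \ref{flag-curvature-formula}); the non-Riemannian terms vanish identically on these flags by symmetry rather than being estimated. If Wallach's Condition (A) fails for roots $\alpha,\beta$ of $\mathfrak{m}$, taking $v\in\mathfrak{g}_{\pm\beta}$ (so $[u,v]=0$) and using the two orthogonality lemmas one gets $U(u,v)=0$, i.e.\ a flag of zero curvature, a contradiction. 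Without identifying these specific flagpoles and the vanishing of $\eta(u)$ and of the Cartan terms there, your ``translation step'' remains an unproved assertion, and in the generality you state it (comparison with osculating metrics on arbitrary relevant planes) it is not available.

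Two further differences are worth noting. First, once Condition (A) is established, the paper does not rerun Wallach's geometric case analysis (intermediate subgroups, fibrations $K/H\hookrightarrow G/H\to G/K$, admissible dimensions): Condition (A) plus equal rank is a purely Lie-algebraic hypothesis, and Proposition 6.1 of Wallach already lists all such pairs $(\mathfrak{g},\mathfrak{h})$ with $\mathfrak{g}$ simple (simplicity of $\mathfrak{g}$ is itself a quick consequence of Condition (A)); existence of positively curved invariant Riemannian metrics on that list is then quoted from Berger and Wallach. Second, for the rank equality you invoke a Finsler analogue of Berger's fixed-point theorem ``by the same second variation reasoning''; that statement would itself require proof in the Finsler setting, whereas the paper argues differently: the common fixed-point set of $T_H$ is the orbit $N_G(T_H)\cdot o$, it is totally geodesic by the fixed-point theorem for Finsler isometries, it is finitely covered by the group $N_G(T_H)/T_H$ carrying an induced positively curved left-invariant metric, and the classification of such Lie groups (only $\mathrm{SU}(2)$, up to covers, besides $\mathrm{U}(1)$) forces $\mathrm{rk}\,G=\mathrm{rk}\,H$ in even dimensions. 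So while your outline points in the right direction, the decisive ideas --- the criticality condition for root-plane flagpoles, the exact (correction-free) flag curvature formula on commuting pairs, and the reduction to Wallach's algebraic Proposition 6.1 --- are missing or replaced by unsubstantiated claims.
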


Combined with the work \cite{Wallach1972} of N.R.~Wallach, Theorem \ref{thm1} gives the following corollary.
\begin{theorem}\label{thm2}
Let $G$ be a compact connected simply connected Lie group and $H$ a connected closed subgroup of $G$ such that the dimension of the coset space $G/H$ is even. Assume $\mathfrak{h}=\mathrm{Lie}(H)$ does not contain any nonzero ideal of $\mathfrak{g}=\mathrm{Lie}(G)$. If $G/H$ admits a $G$-invariant Finsler metric with positive flag curvature, then the pair $(G,H)$ must be one of the following:
\begin{enumerate}
\item Rank one symmetric pairs of compact type. In this case, $G/H$ is one of the even dimensional spheres, the complex projective spaces, the quaternion projective spaces or the $16$-dimensional Cayley plane. Moreover, any $G$-invariant Finsler metric on $G/H$ must be Riemannian and of positive sectional curvature.
\item The pair $(G_2, \mathrm{SU}(3))$. In this case, the coset space
is $S^6=\mathrm{G}_2/\mathrm{SU}(3)$ on which any $G$-invariant Finsler metric must be a Riemannian metric of constant positive sectional curvature.
\item The pair $(\mathrm{Sp}(n), \mathrm{Sp}(n-1)\mathrm{U}(1))$. In this case, the coset space is $\mathbb{C}\mathrm{P}^{2n-1}=\mathrm{Sp}(n)/ \mathrm{Sp}(n-1)\mathrm{U}(1)$,  on which there exist  $G$-invariant Riemannian metrics as well as non-Riemannian Finsler metrics  with positive flag (sectional) curvature.
\item The pair $(\mathrm{SU}(3), T^2)$, where $T^2$ is a maximal torus of $\mathrm{SU}(3)$.
In this case, on the coset space $F^6=\mathrm{SU}(3)/T^2$ there exist $G$-invariant Riemannian metrics as well as non-Riemannian Finsler metrics with positive flag (sectional) curvature.
\item The pair $(\mathrm{Sp}(3), \mathrm{Sp}(1)\times \mathrm{Sp}(1)\times \mathrm{Sp}(1))$. In this case, on the coset space $$F^{12} =\mathrm{Sp}(3) / \mathrm{Sp}(1)\times \mathrm{Sp}(1)\times \mathrm{Sp}(1)$$ there exist invariant Riemannian metrics as well as non-Riemannian Finsler metrics with positive flag (sectional) curvature.
\item The pair $(\mathrm{F}_4,\mathrm{Spin}(8))$. In this case, on the coset space $F^{24}=\mathrm{F}_4/\mathrm{Spin}(8)$
there exist $G$-invariant Riemannian metrics as well as non-Riemannian metrics with positive
flag (sectional) curvature.
\end{enumerate}
\end{theorem}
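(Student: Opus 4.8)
The plan is to obtain Theorem~\ref{thm2} by combining Theorem~\ref{thm1} with Wallach's classification and then analysing, case by case, which invariant Finsler metrics each admissible pair $(G,H)$ carries. First I would record two preliminary reductions: since $G$ is simply connected and $H$ is connected, the homotopy exact sequence of the bundle $H\to G\to G/H$ forces $\pi_1(G/H)=1$; and the hypothesis that $\mathfrak{h}$ contains no nonzero ideal of $\mathfrak{g}$ says exactly that the kernel of the $G$-action on $G/H$ --- the largest normal subgroup of $G$ lying in $H$ --- is discrete, i.e.\ $G$ acts almost effectively on $G/H$. By Theorem~\ref{thm1} the space $G/H$ then admits a $G$-invariant Riemannian metric of positive sectional curvature, so Wallach's classification \cite{Wallach1972} of even-dimensional, simply connected, almost effective homogeneous Riemannian manifolds of positive sectional curvature applies: the pair $(G,H)$ must be one of the six families listed in the statement. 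This already yields the classification; it remains to determine the Finslerian flexibility of each family.

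For the families (1) and (2) --- the rank one symmetric pairs of compact type and $(\mathrm{G}_2,\mathrm{SU}(3))$ --- the decisive fact is that the isotropy representation $\mathrm{Ad}\colon H\to\mathrm{GL}(\mathfrak{m})$ is transitive on the unit sphere of the (unique up to scaling) invariant inner product on $\mathfrak{m}$: for rank one symmetric spaces this is the classical two-point homogeneity, and for $\mathrm{SU}(3)$ acting on $\mathbb{C}^3\cong\mathfrak{m}$ it is the transitivity of $\mathrm{SU}(3)$ on $S^5$. Since a linear group that is transitive on one sphere centred at the origin is transitive on all of them, any $\mathrm{Ad}(H)$-invariant function on $\mathfrak{m}$ is a function of the invariant quadratic form alone; imposing positive homogeneity of degree one on an invariant Minkowski norm then forces it to be a constant multiple of the invariant Euclidean norm. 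Hence every $G$-invariant Finsler metric on $G/H$ is Riemannian, and the essentially unique invariant Riemannian metric --- the symmetric metric, respectively the round metric on $S^6$ --- has positive (respectively constant positive) sectional curvature, as already guaranteed by Wallach's list.

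For the families (3)--(6) I would argue by perturbation. In each of these cases one checks directly that the isotropy representation is reducible, $\mathfrak{m}=\mathfrak{m}_1\oplus\cdots\oplus\mathfrak{m}_k$ with $k\ge 2$ nontrivial $\mathrm{Ad}(H)$-invariant summands (two summands for $\mathrm{Sp}(n)/\mathrm{Sp}(n-1)\mathrm{U}(1)$ with $n\ge 2$, and three pairwise inequivalent summands for each of the Wallach flag manifolds). Fixing a $G$-invariant Riemannian metric $g_0$ of positive sectional curvature and invariant inner products $q_i$ on the summands $\mathfrak{m}_i$, the family $F_t$ defined by $F_t^4=(1-t)\,g_0(\cdot,\cdot)^2+t\sum_i q_i(\cdot)^2$ consists, for small $t\ge 0$, of $\mathrm{Ad}(H)$-invariant strongly convex Minkowski norms depending smoothly on $t$, and is genuinely non-Riemannian for $t\neq 0$ because the quartic form $(1-t)q_0^2+t\sum_i q_i^2$ is not the square of a quadratic form when $k\ge 2$. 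By homogeneity the flag curvature of the associated invariant Finsler metric need only be tested at the base point, where the flags form a compact set, and it depends continuously on the metric in a sufficiently fine topology; thus positivity of flag curvature is an open condition, and the metrics induced by $F_t$ have positive flag curvature for all small $t>0$. Together with the invariant positively curved Riemannian metrics supplied by Wallach, this establishes the assertions in (3)--(6). The main obstacle is this last step: verifying that the deformed norms $F_t$ remain Minkowski norms and, above all, that positivity of the flag curvature survives the deformation --- both rest on continuity of the flag curvature and compactness of the indicatrix and the space of flags, but they must be set up with care; everything else reduces to bookkeeping against Wallach's list and standard facts about two-point homogeneous spaces.
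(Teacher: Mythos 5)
Your proposal is correct and follows essentially the same route as the paper: reduce to Wallach's list via Theorem \ref{thm1} (using almost effectiveness from the ideal hypothesis), use transitivity of the isotropy representation on unit spheres to force Riemannian metrics in cases (1)--(2), and produce non-Riemannian positively curved metrics in cases (3)--(6) by perturbing Wallach's positively curved Riemannian metrics and invoking openness of positive flag curvature (compact indicatrix and flag set at the base point plus homogeneity). The only difference is cosmetic: the paper writes down explicit $(\alpha_1,\dots,\alpha_k)$-type perturbations $F_\varepsilon=\sqrt{\,t_1\alpha_1^2+\cdots+t_k\alpha_k^2+\varepsilon\sqrt{\alpha_1^2+\cdots+\alpha_k^2}\,}$ (and, for $\mathbb{C}\mathrm{P}^{2n-1}$, describes all invariant metrics as $(\alpha_1,\alpha_2)$-metrics), whereas you use a quartic-root deformation, but both rest on the same continuity argument.
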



Essentially Theorem \ref{thm2}
gives the solution for Problem \ref{main-problem} in the even dimensional case, i.e., a complete local description of even dimensional smooth coset spaces admitting homogeneous Finsler metric with positive flag curvature (see the final remark at the end).
However, we must
point out that, for some cases of the list in Theorem \ref{thm2}, it is very hard to find the exact conditions under which an invariant Finsler metric is positively curved. In fact, this problem is even very complicated in the Riemannian case; see \cite{VZ09} for the detailed discussion for the parameters of positively curved homogeneous Riemannian metrics on spheres. For non-Riemannian homogeneous Randers metrics with vanishing S-curvature considered in \cite{HD11},
all positively curved Finsler metrics can be exactly and explicitly presented, but the method cannot be applied   to the even dimensional case in this work.

In Section 2, we recall some fundamental definitions and known results needed in this paper. In Section 3, we introduce the Finslerian submersion and apply this method to the study of homogeneous Finsler spaces. In Section 4, we use the technique of Finslerian submersion to prove a flag curvature formula for
homogeneous Finsler spaces. An   intrinsic proof of the same formula is also given in this ection. In Sections 5 and  6 we prove Theorem \ref{thm1} and Theorem \ref{thm2}, respectively.

\section{Preliminaries in Finsler geometry}

In this section we recall some definitions and results on Finsler spaces. For general Finsler spaces we refer the readers to \cite{BCS00} and \cite{CS04}; for homogeneous Finsler spaces we refer to \cite{DE12}.
\subsection{Minkowski norms and Finsler metrics}

A {\it Minkowski norm} on a real vector space $\mathbf{V}$, $\dim \mathbf{V}=n$, is a continuous real-valued function
$F:\mathbf{V}\rightarrow [0,+\infty)$ satisfying the following conditions:
\begin{enumerate}
\item $F$ is positive and smooth on $\mathbf{V}\backslash\{0\}$;
\item $F(\lambda y)=\lambda F(y)$ for any $\lambda\geq 0$;
\item with respect to any linear coordinates $y=y^i e_i$, the Hessian matrix
\begin{equation}
(g_{ij}(y))=\left(\frac12[F^2]_{y^i y^j}(y)\right)
\end{equation}
is positive definite at any nonzero $y$.
\end{enumerate}

The Hessian matrix $(g_{ij}(y))$ and its inverse $(g^{ij}(y))$ can be used   to raise and lower down indices of relevant tensors in Finsler geometry.

 For any $y\neq 0$, the Hessian matrix $(g_{ij}(y))$ defines an inner product $\langle\cdot,\cdot\rangle_y$ on $\mathrm{V}$ by  $$\langle u,v\rangle_y=g_{ij}(y)u^i v^j,$$
where $u=u^i e_i$ and $v=v^i e_i$. Sometimes we denote the above inner product as $\langle\cdot,\cdot\rangle^F_y$ if there are several norms in consideration.
 This inner product can also be
expressed as
\begin{equation}\label{formula-2-2}
\langle u,v\rangle_y=\frac{1}{2}\frac{\partial^2}{\partial s\partial t}[F^2(y+su+tv)]|_{s=t=0}.
\end{equation}
It is easy to check that the above definition  is independent of the choice of linear coordinates.

 Let $M$ be a smooth manifold  with dimension $n$. A Finsler metric $F$ on $M$ is a continuous function $F:TM\rightarrow [0,+\infty)$
such that it is positive and smooth on the slit tangent bundle $TM\backslash 0$, and its restriction in each tangent
space is a Minkowski norm. We will call $(M,F)$ a {\it Finsler manifold} or a {\it Finsler space}.

Here are some important examples.

Riemannian metrics are a special class of Finsler metrics such that the Hessian matrices only
depend on $x\in M$ and not on $y\in T_x M$. For a Riemannian manifold, the metric is often referred to as a
global smooth section $g_{ij}(x)dx^i dx^j$ of $\mathrm{Sym}^2(T^*M)$.

Randers metrics are the most simple and important class of non-Riemannian metrics in Finsler geometry. They are defined as
$F=\alpha+\beta$, in which $\alpha$ is a Riemannian metric and $\beta$ is a 1-form (see \cite{RA41}). The notion of Randers metrics has been  naturally
generalized to $(\alpha,\beta)$-metrics. An $(\alpha,\beta)$-metric is a Finsler metric of    the form $F=\alpha\phi(\beta/\alpha)$, where $\phi$ is   a positive smooth
real function,  $\alpha$  is a Riemannian metric and  $\beta$ is a $1$-form. In recent years, there have been a lot of research works
concerning $(\alpha,\beta)$-metrics as well as  Randers metrics.

Recently, the first two authors of this paper  defined and studied $(\alpha_1,\alpha_2)$-metrics in \cite{XuDeng2014},  generalizing
$(\alpha,\beta)$-metrics. Using the same idea,
we can define $(\alpha_1,\alpha_2,\ldots,\alpha_k)$-metrics. Let $\alpha$ be a Riemannian metric on $M$, such
that $TM$ can be $\alpha$-orthogonally decomposed as $TM=\mathcal{V}_1\oplus\cdots\oplus\mathcal{V}_k$, in
which each $\mathcal{V}_i$ is an $n_i$-dimensional linear sub-bundle with $n_i>0$ respectively. Let $\alpha_i$
be the restriction of $\alpha$ to each $\mathcal{V}_i$ and naturally regarded as functions on $TM$ with
$\alpha^2=\alpha_1^2+\cdots+\alpha_k^2$. Then a metric $F$ is called an $(\alpha_1,\alpha_2,\ldots,\alpha_k)$-metric
if $F$ can be presented as $F=\sqrt{L(\alpha_1^2,\cdots,\alpha_k^2)}$, where $L$ is a positive smooth real function on an open subset of $\mathbb{R}^k$ satisfying certain conditions. In the following we will show that on some coset spaces there exists (non-Riemannian) invariant $(\alpha_1,\alpha_2)$ or $(\alpha_1,\alpha_2,\alpha_3)$-metrics with positive flag curvature. In our opinion, these metrics will be of great interest in Finsler geometry.

\subsection{Geodesic spray and geodesic}

On a Finsler space $(M,F)$, a local coordinate system $\{x=(x^i)\in M, y=y^j\partial_{x^j}\in T_x M\}$ will be called
a {\it standard} local coordinate system.

The geodesic spray is a vector field $G$ defined on $TM\backslash 0$. In any standard local coordinate system, it can be
presented as
\begin{equation}
G=y^i\partial_{x^i}-2G^i\partial_{y^i},
\end{equation}
in which
\begin{equation}
G^i=\frac{1}{4}g^{il}([F^2]_{x^k y^l}y^k-[F^2]_{x^l}).
\end{equation}

A non-constant curve $c(t)$ on $M$ is called a geodesic if $(c(t),\dot{c}(t))$ is an integration curve of $G$,
in which the tangent field $\dot{c}(t)=\frac{d}{dt}c(t)$ along the curve gives the speed. For any standard local
coordinates, a geodesic $c(t)=(c^i(t))$ satisfies the equations
\begin{equation}
\ddot{c}^{i}(t)+2G^i(c(t),\dot{c}(t))=0.
\end{equation}

It is well known that $F(\dot{c}(t))\equiv\mathrm{const}$, i.e., the geodesics we are considering are
geodesics of nonzero constant speed.

\subsection{Riemann curvature and flag curvature}
On a Finsler space, we have a similar {\it Riemann curvature} as in Riemannian geometry. It can be defined either by  Jacobi fields or the structure equation for
the curvature of the Chern connection.

For a standard local coordinates system, the Riemann curvature is a linear map
$R_y=R^i_k(y)\partial_{x^i}\otimes dx^k:T_x M\rightarrow T_x M$, defined by
\begin{equation}\label{formula-2-6}
R^i_k(y)=2\partial_{x^k}G^i-y^j\partial^2_{x^j y^k}G^i+2G^j\partial^2_{y^j y^k}G^i-\partial_{y^j}G^i\partial_{y^k}G^j.
\end{equation}
The Riemann curvature $R_y$ is self-adjoint with respect to $\langle\cdot,\cdot\rangle_y$.

Using the Riemann curvature, we can generalize the notion of  sectional curvature in Riemannian geometry to {\it flag curvature} in Finsler geometry. Let
$y$ be a nonzero tangent vector in $T_xM$ and $\mathbf{P}$ a tangent plane in $T_x M$ containing $y$, linearly spanned
by $y$ and $v$ for example. Then the flag curvature of the flag $(\mathbf{P}, y)$ is given by
\begin{equation}\label{formula-2-7}
K(x,y,y\wedge v)=K(x,y,\mathbf{P})=\frac{\langle R_y v,v\rangle_y}{\langle y,y\rangle_y\langle v,v\rangle_y-\langle y,v\rangle_y^2}.
\end{equation}
Fixing $x\in M$,
the flag curvature in (\ref{formula-2-7}) does not  depend on the choice of $v$ but on $\mathbf{P}$ and $y$. When $F$ is a Riemannian metric, it is in fact
the sectional curvature, which depends on $\mathbf{P}$ only.
 Sometimes we will write the flag curvature of a Finsler metric $F$ as $K^F(x,y,y\wedge v)$ or $K^F(x,y,\mathbf{P})$ to indicate the metric explicitly.

Z.~Shen has made the following important observation which relates the Riemann curvature of a Finsler metric to that of a Riemannian metric.

Let $Y$ be a tangent vector field on an open set $\mathcal{U}\subset M$ which is nowhere vanishing. Then the Hessian matrices
$(g_{ij}(Y(x)))$ with respect to a  standard local coordinate system
define a smooth Riemannian metric on $\mathcal{U}$ which is  independent of  the local coordinates system. We denote this Riemannian metric as $g_Y$, and call it the {\it localization} of $F$ at $Y$. The Riemann
curvatures for $F$ and $g_Y$ is denoted as $R^F_y$ and $R^{g_Y}_y$, respectively.

If $Y$ is a nonzero geodesic field on an open set $\mathcal{U}\subset M$, i.e., if each integration curve of $Y$ is a geodesic of nonzero constant speed,
then we have  the following theorem of Z.~Shen.

\begin{theorem} \label{theorem-2-1}
Let $Y$ be a geodesic field on an open set $\mathcal{U}\subset M$ such that for $x\in \mathcal{U}$,  $y=Y(x)\neq 0$. Then $R^F_y=R^{g_Y}_y$.
\end{theorem}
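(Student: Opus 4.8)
\textbf{Proof proposal for Theorem \ref{theorem-2-1}.}

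The plan is to reduce the identity $R^F_y = R^{g_Y}_y$ to a pointwise computation in a standard local coordinate system, exploiting the fact that the Riemann curvature (formula (\ref{formula-2-6})) depends only on the geodesic spray coefficients $G^i$ together with their first and second $y$-derivatives, evaluated along $y = Y(x)$. Concretely, I would show that at every point $x \in \mathcal{U}$, the spray coefficients $G^i_F$ of $F$ and $G^i_{g_Y}$ of the localization $g_Y$, \emph{as well as their derivatives $\partial_{y^j} G^i$ and $\partial^2_{y^j y^k} G^i$}, agree when evaluated at the single vector $y = Y(x)$. Since every term on the right-hand side of (\ref{formula-2-6}) is built from precisely these quantities (and from $x$-derivatives of $G^i$ evaluated along $Y$, which I treat below), matching them forces $R^F_y = R^{g_Y}_y$.

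First I would recall that $g_Y$ is defined by $g_{ij}(Y(x))$, so that at the point $x$ one has $[F^2]_{y^i y^j}(Y(x)) = [(g_Y)(\cdot)]_{y^i y^j}$ trivially, but more care is needed for the $x$-derivatives that enter $G^i$. The key structural input is that $F$ and $g_Y$ are \emph{tangent to second order along the direction field $Y$}: writing $\tilde g_{ij}(x) = g_{ij}(x, Y(x))$ for the localization, one checks using the homogeneity of $F$ (Euler's relation $[F^2]_{y^k} y^k = 2F^2$ and its consequences $g_{ij}(y) y^j = \tfrac12 [F^2]_{y^i}$, $\partial_{y^k} g_{ij}\cdot y^k = 0$) that the functions $F^2(x,y)$ and $\tilde g_{ij}(x) y^i y^j$, together with their first $x$-derivatives, agree at $(x, Y(x))$ for every $x$; this is exactly the content needed so that the formula $G^i = \tfrac14 g^{il}([F^2]_{x^k y^l} y^k - [F^2]_{x^l})$ produces the same value for the two metrics at $y = Y(x)$. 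Here the hypothesis that $Y$ is a \emph{geodesic field} enters crucially: it guarantees that the integral curves of $Y$ are geodesics of both $F$ and $g_Y$ (for $g_Y$ this is Shen's point, since a geodesic of $F$ with velocity $Y$ satisfies $\ddot c^i + 2G^i_F = 0$ and $G^i_F(Y) = G^i_{g_Y}(Y)$), which pins down the $x$-dependence of the spray along $Y$ rather than just at one tangent vector.

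The main obstacle will be handling the $x$-derivative terms $\partial_{x^k} G^i$ and $y^j \partial^2_{x^j y^k} G^i$ in (\ref{formula-2-6}): knowing that $G^i_F$ and $G^i_{g_Y}$ coincide \emph{on the submanifold} $\{(x, Y(x))\}$ of $TM$ does not immediately give equality of their ambient $x$-derivatives, because differentiating along that submanifold mixes $\partial_{x^k}$ with $\partial_{y^l} G^i \cdot \partial_{x^k} Y^l$. The resolution is to differentiate the identity $G^i_F(x, Y(x)) = G^i_{g_Y}(x, Y(x))$ along $\mathcal{U}$ and combine it with the already-established equality of the $y$-derivatives $\partial_{y^l} G^i$ at $y = Y(x)$; this lets one solve for $\partial_{x^k} G^i_F(x,Y(x)) = \partial_{x^k} G^i_{g_Y}(x,Y(x))$, and an analogous bootstrap (differentiating once more, using equality of second $y$-derivatives) handles the mixed term $\partial^2_{x^j y^k} G^i$ contracted with $y^j = Y^j$. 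Assembling these coincidences term by term in (\ref{formula-2-6}) yields $R^F_{Y(x)} = R^{g_Y}_{Y(x)}$ for all $x \in \mathcal{U}$, which is the assertion.
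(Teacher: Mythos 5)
Your outline gets the easy half right but leaves the hard half unproved, and one of your stated intermediate claims is actually false. The Euler-relation argument in your second paragraph only yields the zeroth-order statement $G^i_F(x,Y(x))=G^i_{g_Y}(x,Y(x))$, and this holds for \emph{any} nonvanishing field $Y$, geodesic or not, because the Cartan tensor contracted with $y$ vanishes; so it cannot be where the hypothesis "geodesic field" does its work. The hypothesis is needed precisely for the equality of the first $y$-derivatives $\partial_{y^j}G^i$ at $y=Y(x)$, which your bootstrap invokes as "already-established" but which you never establish. In coordinates adapted to $Y$ (i.e. $Y=\partial_{x^1}$, as in the paper's proof of the refinement, Theorem \ref{refine-shen-thm}, which contains Theorem \ref{theorem-2-1}) one finds $N^i_j(Y)=\tilde N^i_j(Y)-2g^{il}(Y)C_{ljk}(Y)G^k(Y)$, so the nonlinear connection coefficients agree only because $G^k(Y)$ vanishes along the integral curves of a geodesic field; for a general $Y$ they differ by a Cartan-tensor term. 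This computation is the heart of the proof and is missing from your proposal.

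Second, your claim that the second $y$-derivatives $\partial^2_{y^jy^k}G^i$ of the two sprays agree at $Y$ is false in general, and both your treatment of the mixed term $y^j\partial^2_{x^jy^k}G^i$ and (implicitly) of the term $2G^j\partial^2_{y^jy^k}G^i$ in (\ref{formula-2-6}) lean on it. A counterexample: take a locally Minkowski, non-Riemannian metric (so $G^i_F\equiv 0$, hence $\partial^2_{y^jy^k}G^i_F\equiv 0$) and the radial geodesic field $Y(x)=x/F(x)$; then $\tilde g_{ij}(x)=g_{ij}(Y(x))$ has Christoffel symbols $\tilde\Gamma_{l,jk}=C_{lkm}(Y)\partial_jY^m+C_{ljm}(Y)\partial_kY^m-C_{jkm}(Y)\partial_lY^m$, which do not vanish (e.g. $\tilde\Gamma_{j,jj}=C_{jjj}(Y)/F\neq 0$ for suitable coordinates and directions), while $\partial^2_{y^jy^k}\tilde G^i=\tilde\Gamma^i_{jk}$. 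The paper sidesteps this entirely by working with $Y=\partial_{x^1}$: then $\partial_{x^k}Y^m=0$, so equalities along the section are literally equalities of ambient $x$-derivatives (no bootstrap needed), and $G^j(Y)=0$ kills the second-$y$-derivative term in (\ref{formula-2-6}). Your coordinate-free route can be salvaged, but only by replacing the false claim with a cancellation: differentiating $N^i_k(\cdot,Y(\cdot))=\tilde N^i_k(\cdot,Y(\cdot))$ along $Y$ produces, via the geodesic equation $Y^j\partial_{x^j}Y^m=-2G^m(Y)$, an error term proportional to $G^m(Y)$ times the difference of the Berwald coefficients, and this must be matched against the $2G^j\partial^2_{y^jy^k}G^i$ term of (\ref{formula-2-6}) rather than proved to vanish on its own. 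As written, the proposal has a genuine gap at exactly the two places where the geodesic hypothesis and the Cartan tensor must enter.
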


It follows immediately from the definition of flag curvature and Theorem \ref{theorem-2-1} that if $\mathbf{P}$ is a tangent plane in $T_x M$ containing $y$, then $K^F(x,y,\mathbf{P})={K}^{g_Y}(x,\mathbf{P})$.

\section{Submersion of  Homogeneous Finsler spaces}

In this section we recall some definitions and results on Finslerian submersion and homogeneous Finsler spaces, and explore the relationship
between these  subjects.

\subsection{Submersion and the subduced metric}

A linear map $\pi:(\mathbf{V}_1,F_1)\rightarrow(\mathbf{V}_2,F_2)$ between two Minkowski spaces is called an {\it isometric submersion}
(or simply {\it submersion}), if it maps the unit ball $\{y\in\mathbf{V}_1|F_1(y)\leq 1\}$ in $\mathbf{V}_1$ onto the unit ball
$\{y\in \mathbf{V}_2|F_2(y)\leq 1\}$ in $\mathbf{V}_2$. Obviously a submersion map $\pi$ must be surjective. The Minkowski norm
$F_2$ on $\mathbf{V}_2$ is uniquely determined by the submersion by the following equality,
\begin{equation}
F_2(w)=\inf\{F_1(v)|\pi(v)=w\}.\label{formula-3-8}
\end{equation}
Given the Minkowski space $(\mathbf{V}_1,F_1)$ and the surjective linear map $\pi:\mathbf{V}_1\rightarrow\mathbf{V}_2$, there
exists a unique Minkowski norm $F_2$ on $\mathbf{V}_2$ such that $\pi$ is a submersion. We will call $F_2$ the subduced norm. For  details of submersion of Finsler metrics, we refer the readers to \cite{PD01}.

To clarify the relationship between the Hessian matrices of $F_1$ and $F_2$, we need the notion of horizonal lift. Given a nonzero vector
$w$ in $\mathbf{V}_2$, the infimum in (\ref{formula-3-8}) can be reached by a unique vector $v$, which is called the horizonal lift of $w$ with respect to the
submersion $\pi$. The horizonal lift $v$ can also be determined by
\begin{equation}
\langle v,\mathrm{ker}\pi\rangle^{F_1}_v=0, \mbox{ and }\pi(v)=w.
\end{equation}
Then the  Hessian matrix of $F_2$ at $w$ is determined by the following proposition.

\begin{proposition}\label{proposition-3-1}
Let $\pi:(\mathbf{V}_1,F_1)\rightarrow(\mathbf{V}_2,F_2)$ be a submersion between Minkowski spaces. Assume that $v$ is the horizonal lift of a
nonzero vector $w$ in $\mathbf{V}_2$. Then the map $\pi:(\mathbf{V}_1,\langle\cdot,\cdot\rangle^{F_1}_v)
\rightarrow(\mathbf{V}_2,\langle\cdot,\cdot\rangle^{F_2}_w)$
is a submersion between Euclidean spaces.
\end{proposition}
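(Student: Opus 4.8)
The plan is to reduce the statement to the familiar characterisation of isometric submersions of Euclidean spaces, and then to verify that characterisation by differentiating twice along a straight line in $\mathbf{V}_2$ through $w$. Put $\mathbf{n}=\ker\pi$ and write $g_1=\langle\cdot,\cdot\rangle^{F_1}_v$, $g_2=\langle\cdot,\cdot\rangle^{F_2}_w$ for the two inner products in question. A surjective linear map $\pi$ is a submersion of the Euclidean spaces $(\mathbf{V}_1,g_1)$ and $(\mathbf{V}_2,g_2)$ if and only if $\pi|_{\mathbf{n}^{\perp_{g_1}}}$ is a linear isometry onto $(\mathbf{V}_2,g_2)$; this restriction is automatically bijective for dimension reasons, so by polarisation it suffices to prove that $g_2(\pi u,\pi u)=g_1(u,u)$ for every $u\in\mathbf{n}^{\perp_{g_1}}$. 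Note also that the horizontal lift condition $\langle v,\mathbf{n}\rangle^{F_1}_v=0$ says precisely that $v\in\mathbf{n}^{\perp_{g_1}}$. Two elementary consequences of the homogeneity of $F_1$ will be used repeatedly: $d(F_1^2)_z(\xi)=2\langle z,\xi\rangle^{F_1}_z$ for all $z\neq 0$ and all $\xi$; and the $z$-derivative of $\langle\cdot,\cdot\rangle^{F_1}_z$ is a fully symmetric trilinear form (the Cartan tensor, up to a constant) which vanishes whenever one of its three arguments equals $z$, because $\langle\cdot,\cdot\rangle^{F_1}_z$ is $0$-homogeneous in $z$.

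\textbf{Main computation.} Fix $w_1\in\mathbf{V}_2$. For $|s|$ small the vector $w+sw_1$ is nonzero, so it has a horizontal lift $\sigma(s)\in\mathbf{V}_1\setminus\{0\}$, with $\sigma(0)=v$, $\pi(\sigma(s))=w+sw_1$, and $F_2^2(w+sw_1)=F_1^2(\sigma(s))$. Granting that $s\mapsto\sigma(s)$ is smooth (see the last paragraph), differentiate this identity twice at $s=0$. Starting from $\frac{d}{ds}F_1^2(\sigma)=2\langle\sigma,\sigma'\rangle^{F_1}_\sigma$ and differentiating once more, the term coming from the $s$-dependence of the inner product $\langle\cdot,\cdot\rangle^{F_1}_\sigma$ is proportional to $C_\sigma(\sigma,\sigma',\sigma')$ and hence vanishes, so
\[
\tfrac12\tfrac{d^2}{ds^2}F_1^2(\sigma(s))\big|_{s=0}=\langle\sigma'(0),\sigma'(0)\rangle^{F_1}_v+\langle v,\sigma''(0)\rangle^{F_1}_v .
\]
Since $\pi(\sigma(s))=w+sw_1$ is affine in $s$ we have $\pi(\sigma''(0))=0$, i.e. $\sigma''(0)\in\mathbf{n}$, whence $\langle v,\sigma''(0)\rangle^{F_1}_v=0$ by the horizontal lift condition. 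On the other hand, by (\ref{formula-2-2}) the left-hand side equals $\frac12\frac{d^2}{ds^2}F_2^2(w+sw_1)|_{s=0}=\langle w_1,w_1\rangle^{F_2}_w=g_2(w_1,w_1)$. Therefore $g_2(w_1,w_1)=g_1(\sigma'(0),\sigma'(0))$.

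\textbf{Identification of $\sigma'(0)$ and conclusion.} Differentiating $\pi(\sigma(s))=w+sw_1$ at $s=0$ gives $\pi(\sigma'(0))=w_1$. Differentiating the horizontal lift relation $\langle\sigma(s),\eta\rangle^{F_1}_{\sigma(s)}=0$ (for fixed $\eta\in\mathbf{n}$) at $s=0$ and discarding the Cartan term (one of its arguments is $v=\sigma(0)$) yields $\langle\sigma'(0),\eta\rangle^{F_1}_v=0$ for all $\eta\in\mathbf{n}$, i.e. $\sigma'(0)\in\mathbf{n}^{\perp_{g_1}}$. Thus $\sigma'(0)$ is the unique element of $\mathbf{n}^{\perp_{g_1}}$ mapped by $\pi$ to $w_1$, and the identity $g_2(w_1,w_1)=g_1(\sigma'(0),\sigma'(0))$ is precisely the isometry condition from the Reformulation. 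Polarising in $w_1$ completes the proof.

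\textbf{Main obstacle.} The only non-formal ingredient is the smooth dependence of the horizontal lift $\sigma(s)$ on $s$ (it stays nonzero automatically, since $w\neq 0$). This is where I would invoke the implicit function theorem: writing $\sigma=\sigma_h+\sigma_n$ with $\sigma_h$ in a fixed linear complement of $\mathbf{n}$ (determined by $\pi(\sigma)=w+sw_1$) and $\sigma_n\in\mathbf{n}$ unknown, the equations $\langle\sigma,\eta\rangle^{F_1}_\sigma=0$, $\eta\in\mathbf{n}$, have linearisation in $\sigma_n$ equal, once the Cartan term drops out, to the restriction of the positive-definite form $\langle\cdot,\cdot\rangle^{F_1}_\sigma$ to $\mathbf{n}$, hence invertible; so $\sigma_n$, and therefore $\sigma$, is a smooth function of $s$. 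Alternatively one can simply cite the smoothness of horizontal lifts established in the submersion literature \cite{PD01}. Everything else above is routine Euler-homogeneity bookkeeping.
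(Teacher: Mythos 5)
Your proposal is correct. Note first that the paper itself offers no proof of Proposition \ref{proposition-3-1}: it is stated as a known fact about isometric submersions of Minkowski spaces, with the reader referred to \cite{PD01}, so there is no internal argument to compare against; your write-up essentially reconstructs the standard proof from that literature. The skeleton is sound: the reformulation of a Euclidean submersion as ``$\pi$ restricted to $(\ker\pi)^{\perp_{g_1}}$ is a linear isometry onto $(\mathbf{V}_2,g_2)$'' is the correct characterisation, and polarisation is legitimate because the map is linear; differentiating $F_1^2(\sigma(s))=F_2^2(w+sw_1)$ twice at $s=0$ is exactly the right move, and both places where you discard a term are justified by the same mechanism, namely that the Cartan tensor of $F_1$ annihilates any triple having the base vector $\sigma(s)$ in one slot (Euler homogeneity of degree $0$ of $g^{F_1}_z$ in $z$); the remaining term $\langle v,\sigma''(0)\rangle^{F_1}_v$ dies because $\pi\circ\sigma$ is affine, so $\sigma''(0)\in\ker\pi$, together with the defining horizontality of $v$; and the identification of $\sigma'(0)$ as the unique $g_1$-horizontal preimage of $w_1$ closes the argument. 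The one non-formal ingredient, smooth dependence of the horizontal lift on the base vector away from $0$, is explicitly asserted in the paper in the paragraph following the proposition, so citing it is unobjectionable, and your implicit-function-theorem sketch is also correct: after the Cartan term drops, the linearisation in the $\ker\pi$-unknown is the restriction of the positive definite form $\langle\cdot,\cdot\rangle^{F_1}_\sigma$ to $\ker\pi$, hence invertible. The net effect of your route is a self-contained verification of what the paper imports from \cite{PD01}, at the cost of a page of Euler-homogeneity bookkeeping; no gap.
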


For any vector $u\in\mathbf{V}_1\backslash\mathrm{ker}\pi$, the horizonal lift of $\pi(u)$ is called a horizonal shift of $u$.
 If $w=0\in\mathbf{V}_2$ or $u\in\ker \pi$,  then we naturally choose $v=0$ as its horizonal lift or horizonal shift respectively.
The map from vector $u\in\mathbf{V}_2$ ($u\in\mathbf{V}_1$ resp.)to
its horizonal lift (horizonal shift resp.) is smooth when $u\neq 0$ ($u\notin\mathrm{ker}\pi$ resp.).

A smooth map $\rho:(M_1,F_1)\rightarrow (M_2,F_2)$ between two Finsler spaces is called a submersion, if for any $x\in M_1$, the induced tangent map
$\rho_*:(T_x M_1,F_1(x,\cdot))\rightarrow(T_{\rho(x)}M_2,F_2(\rho(x),\cdot))$ is a submersion between Minkowski spaces. Restricted to the image of
the submersion $\rho$, the metric $F_2$ is uniquely determined by $F_1$ and the submersion. Let $F_1$ be a Finsler metric on $M_1$ and $\rho:M_1\rightarrow M_2$ be a surjective smooth map. If there is a metric $F_2$ on $M_2$ which makes $\rho$ a submersion, then we call  $F_2$ the subduced metric from $F_1$ and
$\rho$. Note that the subdued metric may not exist; but when it does, it must be unique.

For a submersion between Finsler spaces, the horizonal lift (or the horizonal shift) of a tangent vector field can be similarly defined. The corresponding
integration curves define the horizonal lift (or shift) of smooth curves. Horizonal lift provides a one-to-one correspondence between the geodesics on $M_2$ and
the horizonal geodesics on $M_1$, so the horizonal lift of a geodesic field is also a geodesic field. Using Theorem \ref{theorem-2-1}, Proposition
\ref{proposition-3-1} and the curvature formula of Riemannian submersions, one can prove  the following theorem (see \cite{PD01}).

\begin{theorem}\label{theorem-3-2}
Let $\rho:(M_1,F_1)\rightarrow(M_2,F_2)$ be a submersion of Finsler spaces. Assume that $x_2=\rho(x_1)$,  and that $y_2,v_2\in T_{x_2}M_2$  are two linearly independent tangent vectors. Let $y_1$
be the horizonal lift of $y_2$, and $v_1$ the horizonal lift of $v_2$ with respect to the induced submersion $\rho_*:(T_{x_1}M_1,\langle\cdot,\cdot\rangle_{y_1})
\rightarrow(T_{x_2}M_2,\langle\cdot,\cdot\rangle_{y_2})$. Then the flag curvature of $(M_1,F_1)$ and $(M_2, F_2)$ satisfies  the following inequality:
\begin{equation}
K^{F_1}(x_1,y_1,y_1\wedge v_1)\leq K^{F_2}(x_2,y_2,y_2\wedge v_2).
\end{equation}
\end{theorem}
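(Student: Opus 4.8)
The plan is to deduce the inequality from the classical Gray--O'Neill curvature formula for Riemannian submersions, using Z.~Shen's localization principle (Theorem \ref{theorem-2-1}) together with Proposition \ref{proposition-3-1} to pass from the Finslerian submersion to a Riemannian one. First I would promote $y_2$ to a geodesic field: on a small open neighborhood $\mathcal{U}_2\subset M_2$ of $x_2$ choose a nowhere-vanishing geodesic field $Y_2$ with $Y_2(x_2)=y_2$. Taking the horizontal lift $Y_1$ of $Y_2$ on a suitable open set $\mathcal{U}_1\subset\rho^{-1}(\mathcal{U}_2)$ around $x_1$, the remarks preceding the theorem guarantee that $Y_1$ is again a nowhere-vanishing geodesic field, and clearly $Y_1(x_1)=y_1$. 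Theorem \ref{theorem-2-1} then identifies the flag curvatures with the sectional curvatures of the localizations $g_{Y_1}$ and $g_{Y_2}$, namely $K^{F_1}(x_1,y_1,y_1\wedge v_1)=K^{g_{Y_1}}(x_1,y_1\wedge v_1)$ and $K^{F_2}(x_2,y_2,y_2\wedge v_2)=K^{g_{Y_2}}(x_2,y_2\wedge v_2)$.

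The key step is to check that $\rho\colon(\mathcal{U}_1,g_{Y_1})\rightarrow(\mathcal{U}_2,g_{Y_2})$ is a Riemannian submersion and that $y_1\wedge v_1$ is one of its horizontal planes. Pointwise, since $Y_1(p)$ is by construction the $F_1$-horizontal lift of $Y_2(\rho(p))$, Proposition \ref{proposition-3-1} applies with $v=Y_1(p)$ and shows that $\rho_*$ is a linear isometric submersion from $(T_pM_1,\langle\cdot,\cdot\rangle^{F_1}_{Y_1(p)})$ onto $(T_{\rho(p)}M_2,\langle\cdot,\cdot\rangle^{F_2}_{Y_2(\rho(p))})$, i.e. a metric submersion for $g_{Y_1}$ and $g_{Y_2}$. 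Moreover the $g_{Y_1}$-orthogonal complement of $\ker\rho_*$ coincides with the Finslerian horizontal subspace at $Y_1(p)$, since both are cut out by the single condition $\langle\,\cdot\,,\ker\rho_*\rangle^{F_1}_{Y_1(p)}=0$. In particular $v_1$, being the horizontal lift of $v_2$ relative to $\langle\cdot,\cdot\rangle_{y_1}$, is $g_{Y_1}$-horizontal, so $y_1\wedge v_1$ is a horizontal plane that $\rho_*$ carries isometrically onto $y_2\wedge v_2$.

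Finally I would invoke the O'Neill formula for the Riemannian submersion $\rho$: for a horizontal plane $\mathbf{P}_1=y_1\wedge v_1$ projecting to $\mathbf{P}_2=y_2\wedge v_2$, one has $K^{g_{Y_2}}(x_2,\mathbf{P}_2)=K^{g_{Y_1}}(x_1,\mathbf{P}_1)+3\,|A|^2$, where $A$ is the integrability (O'Neill) tensor of $\rho$ evaluated on a $g_{Y_1}$-orthonormal basis of $\mathbf{P}_1$, hence the correction term is nonnegative. Combining this with the two curvature identities of the first step gives $K^{F_1}(x_1,y_1,y_1\wedge v_1)=K^{g_{Y_1}}(x_1,\mathbf{P}_1)\leq K^{g_{Y_2}}(x_2,\mathbf{P}_2)=K^{F_2}(x_2,y_2,y_2\wedge v_2)$, as claimed.

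I expect the main obstacle to be the middle step: one must verify carefully that the pointwise content of Proposition \ref{proposition-3-1} globalizes into a genuine Riemannian submersion between the localized metrics $g_{Y_1}$ and $g_{Y_2}$, and that the two notions of ``horizontal'' in play — Finslerian horizontality along $Y_1$ versus $g_{Y_1}$-orthogonality to the fibers — actually coincide, so that the classical Gray--O'Neill machinery can be quoted without modification. Once this compatibility is in place, the rest is the standard Riemannian submersion computation together with the two applications of Shen's theorem.
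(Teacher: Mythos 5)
Your proposal is correct and follows essentially the same route the paper indicates: the paper does not write out a proof but states that the theorem follows from Shen's Theorem \ref{theorem-2-1}, Proposition \ref{proposition-3-1}, and the Riemannian submersion (O'Neill) curvature formula, citing \cite{PD01}, which is exactly the argument you assemble (geodesic field through $y_2$, horizontal lift still geodesic, localized metrics forming a Riemannian submersion, coincidence of the two horizontality notions, then O'Neill). Your filled-in details, in particular the identification of the $g_{Y_1}$-horizontal space with the Finslerian horizontal space at $Y_1(p)$, are the right ones.
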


\subsection{Homogeneous Finsler metrics subduced by a submersion}

A connected Finsler space $(M,F)$ is called homogeneous, if
the full group of  isometries  $I(M,F)$ acts transitively on $M$. In this case, the identity component $I_0(M, F)$ also acts transitively on $M$ (see \cite{HE78}). If $G$ is a
 closed connected subgroup  of $I_0(M,F)$  which acts transitively on $M$,
then $M$ can
also be identify with the coset space  $G/H$, in which $H$ is the isotropy subgroup at some
point. In this case, we usually say that  $F$ is a $G$-homogeneous Finsler metric on $M$. Notice that the isometry group of any compact Finsler space
must be compact.  Hence  we only need to consider compact connected $G$ with
a closed subgroup $H$, endowed with  a $G$-homogeneous Finsler metric
$F$ on $G/H$. Denote the natural projection from $G$ to $G/H$ as $\pi$.
Then $H$ can be chosen as  the isotropy subgroup at $o=\pi(e)$.

Let $\mathfrak{g}$ and $\mathfrak{h}$ be the Lie algebras of $G$ and $H$ respectively. Because $H$ is a compact subgroup of $G$, we can find
an $\mathrm{Ad}(H)$-invariant inner product $\langle\cdot,\cdot\rangle$ on
$\mathfrak{g}$. Then we have an {\it $\mathrm{Ad(H)}$-invariant decomposition} for
the homogeneous space $G/H$,
\begin{equation}
\mathfrak{g}=\mathfrak{h}+\mathfrak{m},
\end{equation}
in which $\mathfrak{m}$ is the orthogonal complement of $\mathfrak{h}$.
Obviously this decomposition is reductive, i.e. $[\mathfrak{h},\mathfrak{m}]
\subset\mathfrak{m}$. The vector space $\mathfrak{m}$ can be
identified with the tangent space $T_o(G/H)$.


In this paper, we will mainly deal with the situations that the homogeneous Finsler space $(M,F)$ is compact (from the property of being positively curved), with
$M=G/H$ in which $G$ is a closed subgroup of the compact transformation group $I_0(M,F)$. In this case, we can choose an $\mathrm{Ad}(G)$-invariant inner product on $\mathfrak{g}$, the corresponding decomposition $\mathfrak{g}=\mathfrak{h}+\mathfrak{m}$ will be called a {\it bi-invariant
decomposition} of $\mathfrak{g}$ for the homogeneous space $G/H$.

The restriction to $T_o(G/H)$ defines a canonical one-to-one correspondence between $G$-homogeneous Finsler metrics on $G/H$ and
$\mathrm{Ad}(H)$-invariant Minkowski norm on $\mathfrak{m}$. For simplicity we will use the same $F$ to denote the corresponding Minkowski norm  on $\mathfrak{m}$.

The method of submersion can be applied to define homogeneous Finsler metrics on $G/H$.
Let $\bar{F}$ be a left invariant Finsler metric on $G$ which is right invariant under $H$. Then the following lemma indicates that there exists a uniquely defined subduced homogeneous metric
on $G/H$.
\begin{lemma} \label{lemma-3-3}
Keep all the above notations. Then there is a uniquely defined homogeneous metric $F$ on $G/H$ such that the tangent map
\begin{equation}
\pi_*:(T_g G,\bar{F}(g,\cdot))\rightarrow (T_{\pi(g)}(G/H),F(\pi(g),\cdot))
 \end{equation}
 is a submersion for any $g\in G$.
\end{lemma}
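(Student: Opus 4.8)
The plan is to construct $F$ fiberwise from the subduced-norm construction of Section~3.1, and then verify that the resulting family of Minkowski norms descends to $G/H$, is smooth, and is $G$-invariant. For each $g\in G$ the natural projection $\pi:G\to G/H$ induces a surjective linear map $\pi_*:T_gG\to T_{\pi(g)}(G/H)$, so by the existence-and-uniqueness of subduced Minkowski norms there is a unique Minkowski norm $F_g$ on $T_{\pi(g)}(G/H)$ making $\pi_*:(T_gG,\bar F(g,\cdot))\to(T_{\pi(g)}(G/H),F_g)$ a submersion, explicitly $F_g(w)=\inf\{\bar F(g,v):v\in T_gG,\ \pi_*v=w\}$. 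Uniqueness of the lemma's $F$ is then automatic: any metric for which every $\pi_*$ is a submersion must restrict on each $T_{\pi(g)}(G/H)$ to $F_g$.

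The first substantive point is that $F_g$ depends only on $\pi(g)$. If $\pi(g')=\pi(g)$ then $g'=gh$ for some $h\in H$; since $\bar F$ is right-invariant under $H$, the right translation $R_h$ is an isometry of $(G,\bar F)$, and from $\pi\circ R_h=\pi$ we get $\pi_*\circ(R_h)_*=\pi_*$ at $g$. Substituting $v=(R_h)_*v'$ in the infimum defining $F_{gh}(w)$ and using $\bar F(gh,(R_h)_*v')=\bar F(g,v')$ together with $\pi_*((R_h)_*v')=\pi_*v'$ gives $F_{gh}(w)=F_g(w)$. Thus $F(\pi(g),\cdot):=F_g$ is well defined on $T(G/H)$, and each $F(x,\cdot)$ is a Minkowski norm by construction.

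For smoothness of $F$ on the slit tangent bundle of $G/H$, I would use that $\pi:G\to G/H$ is a submersion of manifolds, hence admits smooth local sections $s:U\to G$ over open sets $U\subset G/H$. Over such a $U$ one has $F(x,w)=\bar F(s(x),v_w)$, where $v_w$ is the horizontal lift of $w$ for the submersion $\pi_*$ at $s(x)$ (the minimizer realizing the infimum); since $\bar F$ is smooth away from the zero section and the passage to horizontal lifts is smooth there by the smoothness statement following Proposition~\ref{proposition-3-1}, $F$ is smooth over $U$ off the zero section. Covering $G/H$ by such neighborhoods finishes this step, and together with positive homogeneity this also yields continuity along the zero section.

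Finally, $G$-invariance. Since $\bar F$ is left-invariant, every left translation $L_a$ ($a\in G$) is an isometry of $(G,\bar F)$, and $\pi\circ L_a=\tau_a\circ\pi$, where $\tau_a$ is the diffeomorphism of $G/H$ induced by $a$. Writing $v=(L_a)_*v'$ in the infimum defining $F(\tau_a(\pi(g)),(\tau_a)_*w)$, and using $\bar F(ag,(L_a)_*v')=\bar F(g,v')$ with the fact that $(\tau_a)_*$ is a linear isomorphism, shows this infimum equals $F(\pi(g),w)$; hence each $\tau_a$ is an isometry of $(G/H,F)$, and since $G$ acts transitively on $G/H$ the metric $F$ is homogeneous. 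I expect the well-definedness step and the smoothness step to be the two places needing genuine care — the former is exactly where the right-$H$-invariance hypothesis enters, and the latter is where one must upgrade the pointwise infimum formula to an honestly smooth Finsler metric via local sections and the smooth dependence of horizontal lifts.
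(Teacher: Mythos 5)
Your proof is correct, but it is organized differently from the paper's. The paper subduces only once, at the identity: it takes the norm on $\mathfrak{m}=T_o(G/H)$ subduced from $\bar F(e,\cdot)$ via $\pi_*|_{T_eG}$, observes that it is $\mathrm{Ad}(H)$-invariant because $\bar F(e,\cdot)$ is, extends it to all of $G/H$ by left translations (so smoothness and homogeneity are absorbed into the canonical correspondence between $G$-invariant metrics and $\mathrm{Ad}(H)$-invariant Minkowski norms on $\mathfrak{m}$), and then verifies the submersion property at an arbitrary $g$ by factoring $\pi_*|_{T_gG}=g_*\circ\pi_*|_{T_eG}\circ(L_{g^{-1}})_*$ through isometries. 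You instead subduce at every $g\in G$, which makes the submersion property and uniqueness tautological, but then must separately check three things: descent to $G/H$ (your right-$H$-invariance computation, which is exactly the paper's $\mathrm{Ad}(H)$-invariance step in fiberwise form), $G$-invariance (left invariance of $\bar F$, as in the paper), and smoothness. The smoothness step is where your route costs extra: the statement after Proposition \ref{proposition-3-1} gives smoothness of the horizontal lift within a single fixed Minkowski space, whereas you need smooth dependence on the base point $x$ as well; this does hold (the lift is cut out by $\langle v,\ker\pi_*\rangle^{\bar F}_v=0$, $\pi_*v=w$, and the positive definiteness of the Hessian lets the implicit function theorem apply with parameters), but you should say so, or alternatively invoke your own $G$-invariance to reduce smoothness to the fiber over $o$, where it follows from the fact that a subduced norm of a Minkowski norm is again a Minkowski norm. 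With that small point made explicit, your argument is complete and delivers the same lemma; the paper's arrangement is shorter precisely because it never needs a pointwise smoothness argument.
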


\begin{proof}
Let $\mathfrak{g}=\mathfrak{h}+\mathfrak{m}$ be an $\mathrm{Ad}(H)$-invariant
decomposition for $G/H$.
From the previous subsection, we see that the tangent map $\pi_*:T_e G\rightarrow T_o (G/H)$ defines a unique subduced Minkowski norm
$F$ on $\mathfrak{m}=T_o M$ from $\bar{F}(e,\cdot)$. Since $\bar{F}(e,\cdot)$ is
$\mathrm{Ad}(H)$-invariant, $F$ is also $\mathrm{Ad}(H)$-invariant. Then left translations
by $G$ defines a $G$-homogeneous metric on $M$, which is also denoted as $F$ for simplicity. Since $\pi_*|_{T_eG}$ is a submersion, $\bar{F}$ is left $G$-invariant, and $F$ is $G$-homogeneous. Then the map $\pi_*|_{T_g G}=g_*\circ\pi_*|_{T_eG}\circ(L_{g^{-1}})_*$
is a submersion between the Minkowski spaces $(T_g G,\bar{F}(g,\cdot))$ and $(T_{\pi(g)}M,F(\pi(g),\cdot))$, for any $g\in G$.
\end{proof}

On the other hand, the next lemma indicates that any homogeneous Finsler metric can be
subduced from a well-chosen Finsler metric $\bar{F}$ on a Lie group and the natural projection.

\begin{lemma}\label{lemma-3-4}
Let $F$ be a homogeneous metric on $G/H$,  $\mathfrak{g}=\mathfrak{h}+\mathfrak{m}$ an $\mathrm{Ad}(H)$-invariant
decomposition for $G/H$. Then there exists a
left $G$-invariant and right $H$-invariant metric $\bar{F}$ on $G$ such that
$\bar{F}|_\mathfrak{m}=F$ when they are viewed as Minkowski norms on $\mathfrak{g}$ and $\mathfrak{m}$ respectively, and
$F$ is subduced from $\bar{F}$ and the projection $\pi:G\rightarrow G/H$ when they are viewed as Finsler metrics.
\end{lemma}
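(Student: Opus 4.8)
The plan is to construct $\bar{F}$ on $G$ by running the procedure of Lemma~\ref{lemma-3-3} in reverse. Fix the given $\mathrm{Ad}(H)$-invariant decomposition $\mathfrak{g}=\mathfrak{h}+\mathfrak{m}$, identify $\mathfrak{m}$ with $T_o(G/H)$, and view $F$ as an $\mathrm{Ad}(H)$-invariant Minkowski norm on $\mathfrak{m}$. By the usual correspondence between left $G$-invariant, right $H$-invariant Finsler metrics on $G$ and $\mathrm{Ad}(H)$-invariant Minkowski norms on $\mathfrak{g}$, together with Lemma~\ref{lemma-3-3} and the uniqueness of subduced metrics, it is enough to exhibit an $\mathrm{Ad}(H)$-invariant Minkowski norm $\bar{F}$ on $\mathfrak{g}$ such that (a)~$\bar{F}|_{\mathfrak{m}}=F$, and (b)~the Minkowski norm subduced on $\mathfrak{m}\cong\mathfrak{g}/\mathfrak{h}$ by $\pi_*:\mathfrak{g}\to\mathfrak{m}$ (the projection with kernel $\mathfrak{h}$) is $F$. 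Indeed, left-translating such a $\bar{F}$ over $G$ gives a left $G$-invariant and, since $\bar{F}(e,\cdot)$ is $\mathrm{Ad}(H)$-invariant, also right $H$-invariant metric; Lemma~\ref{lemma-3-3} then makes $\pi$ a submersion onto $(G/H,F')$ with $F'$ the homogeneous metric corresponding to the norm subduced on $\mathfrak{m}$, and (b) forces $F'=F$ while (a) is exactly the restriction statement.

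To build $\bar{F}$ I would pick an auxiliary $\mathrm{Ad}(H)$-invariant inner product $\langle\cdot,\cdot\rangle$ on $\mathfrak{g}$ with $\mathfrak{h}\perp\mathfrak{m}$, and write each $v\in\mathfrak{g}$ as $v=u+w$ with $u\in\mathfrak{h}$, $w\in\mathfrak{m}$. The model object is the product body
\[
P=\{u+w\in\mathfrak{g}:F(w)^2+\langle u,u\rangle\le 1\},
\]
which is compact, convex, $\mathrm{Ad}(H)$-invariant, satisfies $P\cap\mathfrak{m}=\{F\le 1\}$ and $\pi_*(P)=\{F\le 1\}$, and whose boundary is smooth and strongly convex away from the axis $\mathfrak{h}$. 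I would let $\bar{F}$ be the Minkowski norm whose unit ball $\bar{B}$ agrees with $P$ on a neighborhood of $\{v:F(\pi_* v)\ge 1/2\}$ and, on the complementary region near $\mathfrak{h}$, is an $\mathrm{Ad}(H)$-invariant, convexity-preserving smoothing of $P$, arranged so that $\bar{B}$ is $C^\infty$ and strongly convex. Since the modification touches only vectors with small $\mathfrak{m}$-component, one still has $\bar{B}\cap\mathfrak{m}=\{F\le 1\}$ and $\pi_*(\bar{B})=\{F\le 1\}$, whence (a) is immediate. For (b), by (\ref{formula-3-8}) the subduced norm is $w\mapsto\inf\{\bar{F}(u+w):u\in\mathfrak{h}\}$; this is $\le F(w)$ since $\bar{F}(w)=F(w)$, and it is $\ge F(w)$ because for $w\ne 0$ the vector $(u+w)/F(w)$ has $\mathfrak{m}$-component $w/F(w)$ on the indicatrix of $F$, hence lies where $\bar{B}=P$, and there the identity $1+\langle u,u\rangle/F(w)^2\ge 1$ shows $(u+w)/F(w)\notin\mathrm{int}\,\bar{B}$, i.e.\ $\bar{F}(u+w)\ge F(w)$.

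The only genuinely delicate point is the existence of the smoothing, and it is forced upon us: $F^2$ fails to be twice differentiable at $0\in\mathfrak{m}$ unless $F$ comes from an inner product, so $\partial P$ is merely $C^1$ along $\mathfrak{h}\setminus\{0\}$ and the naive extension $\bar{F}=\sqrt{F^2\circ\pi_*+\langle\,\cdot\,,\cdot\,\rangle|_{\mathfrak{h}}}$ is not a Minkowski norm. What is needed is the standard fact that a compact convex body which is already $C^\infty$ and strongly convex outside a cone of ``bad'' directions can be perturbed, by an $H$-equivariant mollification of its support function supported in that cone, into a globally $C^\infty$ strongly convex body without being changed outside a slightly larger cone (compatible with the Finslerian submersion formalism of \cite{PD01}). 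Once this is granted, the computation of the previous paragraph completes the proof; and the whole construction is manifestly $\mathrm{Ad}(H)$-invariant since $\langle\cdot,\cdot\rangle$, $F$, and the mollifier may all be taken $\mathrm{Ad}(H)$-invariant.
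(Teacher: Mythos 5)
Your reduction to a statement about Minkowski norms on $\mathfrak{g}$ is correct and matches the paper's strategy, and your verification of (a) and (b) for any body that coincides with the product body $P$ near $\{v:F(\pi_*v)\ge 1/2\}$ is sound (in particular the fiberwise argument giving $\inf_{u\in\mathfrak{h}}\bar{F}(u+w)\ge F(w)$ works, and it cleanly replaces the paper's tangency-to-the-cylinder reformulation). You also diagnose correctly where the difficulty sits: $\sqrt{F^2\circ\mathrm{pr}_{\mathfrak{m}}+\langle\cdot,\cdot\rangle|_{\mathfrak{h}}}$ is not a Minkowski norm because $F^2$ is not $C^2$ at $0\in\mathfrak{m}$, so $\partial P$ is neither smooth nor strongly convex along $\mathfrak{h}$.

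The gap is that this diagnosed point is essentially the whole content of the lemma, and you do not prove it: the ``standard fact'' you invoke --- that an $\mathrm{Ad}(H)$-equivariant mollification of the support function, supported in the bad cone, turns $P$ into a globally $C^\infty$, strongly convex body without changing it outside a slightly larger cone --- is not an off-the-shelf citable theorem, and as stated the mechanism is dubious. A support function modified only on a cone of directions need not remain sublinear (convexity is a global constraint), so blending a mollified piece with the untouched piece can destroy convexity in the transition zone; moreover, for a Minkowski norm you need strong convexity in the sense of a positive definite Hessian of $\bar{F}^2$ (equivalently a strongly convex indicatrix), not just a smooth convex body, and you must also check that the perturbation keeps $\bar{B}$ inside the cylinder $\mathfrak{h}\times B_F$ so that (b) survives. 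All of this can be carried out, but it needs an actual quantitative argument (smallness of the error terms produced by the cutoff against the positive definite Hessians in the overlap region), or an explicit cap construction. That is precisely what the paper's proof consists of: it builds the extension dimension by dimension as the graph $y^n=\pm(\phi(F^2(y'))+\lambda\psi(y'))$ in (\ref{0000}), where $\phi(t)=\sqrt{1-t}$ on $[1/2,1]$ reproduces your product body near the indicatrix and the bump $\lambda\psi$, with negative definite Hessian at the pole, restores strong convexity there; it then obtains $\mathrm{Ad}(H)$-invariance by averaging $\tilde{F}^2$ over $H$, using that the cylinder $\mathfrak{h}\times I_F$ is $\mathrm{Ad}(H)$-invariant so the tangency (hence the subduction property) is preserved. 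So your outline is a reasonable alternative framing, but until you supply a proof of the equivariant, localized smoothing with strong convexity and containment in the cylinder, the central step is missing.
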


\begin{proof}
We need to construct an $\mathrm{Ad}(H)$-invariant Minkowski norm
$\bar{F}$ satisfying the conditions of Lemma \ref{lemma-3-3}. The conditions that $\bar{F}|_\mathfrak{m}=F$
and that $F$ is subduced from $\bar{F}$ by the projection can be equivalently
stated as that the indicatrix $I_{\bar{F}}$ of $\bar{F}$ is tangent to the cylinder
$\mathfrak{h}\times I_{F}$ at each point of the indicatrix $I_F\subset\mathfrak{m}$
of $F$. A Minkowski norm $\tilde{F}$ on $\mathfrak{g}$ which satisfies this statement
but may not be $\mathrm{Ad}(H)$-invariant can be constructed inductively by the
following observation:

Any Minkowski norm $F$ on $\mathbb{R}^{n-1}=\{(y^1,\ldots,y^{n-1},0)|\forall y^1,\ldots,y^{n-1}\}\subset\mathbb{R}^n$ can be extended to a Minkowski norm
$\tilde{F}$ on $\mathbb{R}^n$ such that the indicatrix $I_{\tilde{F}}$ is tangent
to the cylinder $I_F\times \mathbb{R}$ at each point of the indicatrix $I_F$ of $F$.

To prove the above  assertion, we need a non-negative function
$\phi\in C[0,1]\cap C^\infty[0,1)$,
such that on $[1/2,1]$, $\phi(t)=\sqrt{1-t}$, and all derivatives $\phi^{(k)}$ for $k>0$ vanish at 0,
with $\phi'(t)<0$ and $\phi''(t)<0$ for $0<t<1$. We also need a smooth function $\psi$
on $\mathbb{R}^{n-1}$
such that $\psi$ is compactly supported in the closed ball $B^F_{1/2}(0)\subset\mathbb{R}^{n-1}$ with  $F$-radius $1/2$ and  center $0$, such that $0$ is a critical point of $\psi$ with a negative definite Hessian matrix there. When a positive number $\lambda$ is
sufficiently close to $0$, the set
\begin{equation}\label{0000}
\{y=(y',y^n)| y^n=\pm (\phi(F^2(y'))+\lambda\psi(y'))\}
\end{equation}
is a smooth hypersurface surrounding 0. Obviously it is tangent to
$I_F\times\mathbb{R}$ at each point of $I_F$. To see that it defines a Minkowski norm
$\tilde{F}$, we
need to check the convexity condition. Notice that when
$|y^n|<\frac{\sqrt{2}}{2}\phi(\frac12)F(y')$, it defines the function
$\sqrt{F^2(y')+(y^n)^2}$ with $y=(y',y^n)$, for which the convexity condition of the Minkowski norm is satisfied at its smooth points. For other points, the hypersurface (\ref{0000}) coincides
with the graphs of $\pm f$, in which $f(y')=\phi(F^2(y'))+\lambda\phi(y')$ for
$y'\in B^F_{3/4}(0)$. The Hessian of $\phi(F^2)$
is negative definite everywhere in $B^F_{3/4}(0)$ except that is is euqal to zero at $y'=0$, while the
Hessian of $\psi$ near $0$ is negative definite, hence  the Hessian of $f$ at each point of $B^F_{3/4}(0)$ is negative definite   when the positive $\lambda$ is sufficiently close to $0$. This argument proves the convexity condition at all points.

With the Minkowski norm $\tilde{F}$ on $\mathfrak{g}$ inductively constructed above, the average
\begin{equation}
\bar{F}(y)=\sqrt{\frac{\int_H \tilde{F}^2(\mathrm{Ad}(h)y)d\mathrm{vol}_H}{\int_H d\mathrm{vol}_H}}
\end{equation}
with respect to a bi-invariant volume form $d\mathrm{vol}_H$ defines an
$\mathrm{Ad}(H)$-invariant Minkowski norm. Since  $F$ is  $\mathrm{Ad}(H)$-invariant, the cylinder $\mathfrak{h}\times I_F$ is preserved under the  $\mathrm{Ad}(H)$-action. Therefore  $I_{\bar{F}}$ is tangent to $\mathfrak{h}\times I_F$ at each point of $I_F=0\times I_F\subset\mathfrak{g}$, i.e., it is
the Minkowski norm as required in  the lemma.
\end{proof}

\section{A flag curvature formula for homogeneous Finsler spaces}

The purpose of this section is to prove the following theorem.

\begin{theorem} \label{flag-curvature-formula}
Let $(G/H,F)$ be a connected homogeneous Finsler space, and $\mathfrak{g}=\mathfrak{h}+\mathfrak{m}$ be an $\mathrm{Ad}(H)$-invariant
decomposition for $G/H$. Then for any linearly independent commuting pair
$u$ and $v$ in $\mathfrak{m}$ satisfying
$
\langle[u,\mathfrak{m}],u\rangle^F_u=0
$,
we have
\begin{equation*}
K^F(o,u,u\wedge v)=\frac{\langle U(u,v),U(u,v)\rangle_u^F}
{\langle u,u\rangle_u^F \langle v,v\rangle_u^F-
{\langle u,v\rangle_u^F}\langle u,v\rangle_u^F},
\end{equation*}
 where $U(u,v)$ is the bi-linear symmetric mapping from $\mathfrak{m}\times \mathfrak{m}$ to $\mathfrak{m}$  determined by
\begin{equation*}
\langle U(u,v),w\rangle_u^F=\frac{1}{2}(\langle[w,u]_\mathfrak{m},v\rangle_u^F
+\langle[w,v]_\mathfrak{m},u\rangle_u^F), \mbox{ for any }w\in\mathfrak{m},
\end{equation*}
where $[\cdot,\cdot]_\mathfrak{m}=\mathrm{pr}_\mathfrak{m}\circ[\cdot,\cdot]$ and $\mathrm{pr}_\mathfrak{m}$ is the projection
with respect to the given $\mathrm{Ad}(H)$-invariant decomposition.
\end{theorem}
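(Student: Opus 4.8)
The plan is to realize the flag curvature on $G/H$ as a limit/comparison coming from a Finslerian submersion $\pi:(G,\bar F)\to(G/H,F)$, and then compute the relevant Riemann curvature on the base via Shen's localization theorem. First I would invoke Lemma \ref{lemma-3-4} to choose a left $G$-invariant, right $H$-invariant Finsler metric $\bar F$ on $G$ with $\bar F|_{\mathfrak m}=F$ and with $\pi$ a submersion. The condition $\langle[u,\mathfrak m],u\rangle^F_u=0$ is exactly the statement that $u$, viewed as a tangent vector at $o$, lifts horizontally (with respect to the inner product $\langle\cdot,\cdot\rangle^F_u$, using Proposition \ref{proposition-3-1}) to $u\in\mathfrak g\subset T_eG$, and moreover that the one-parameter subgroup $\exp(tu)$ projects to a geodesic of $F$ through $o$: indeed for a bi-invariant-type decomposition the left-invariant field generated by $u$ is a geodesic field of $\bar F$ exactly when $\langle[u,\cdot],u\rangle^{\bar F}_u=0$ on all of $\mathfrak g$, and since $[u,\mathfrak h]\subset\mathfrak m$ and $\langle u,\mathfrak h\rangle=0$ this reduces to the hypothesis on $\mathfrak m$. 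Thus the orbit of $u$ through $o$ is a geodesic field on an open neighborhood, and by Theorem \ref{theorem-2-1}, $R^F_u=R^{g_U}_u$ where $g_U$ is the Riemannian localization of $F$ at the invariant geodesic field $U$ extending $u$.

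Next I would identify $g_U$ with a genuine $G$-invariant Riemannian metric on $G/H$, namely the one whose value at $o$ is the inner product $\langle\cdot,\cdot\rangle^F_u$ on $\mathfrak m$: because $F$ is $G$-invariant and the localization is natural under isometries, $g_U$ is $G$-invariant, so its curvature at $o$ is computed by the standard homogeneous Riemannian formula. Likewise the Finsler metric $\bar F$ localizes at the left-invariant geodesic field through $u$ to a left-invariant (and right-$H$-invariant) Riemannian metric $\bar g$ on $G$ whose value at $e$ is $\langle\cdot,\cdot\rangle^{\bar F}_u$, and Proposition \ref{proposition-3-1} says $\pi:(G,\bar g)\to(G/H,g_U)$ is a Riemannian submersion at these base points. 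Now I compute: by O'Neill's formula for Riemannian submersions, the sectional curvature of the plane $u\wedge v$ on the base equals the curvature of the horizontal lift $u\wedge v$ upstairs plus $\tfrac34\|[\,u,v\,]^{\mathrm{vert}}\|^2$-type correction term $\tfrac{3}{\langle u,u\rangle\langle v,v\rangle - \langle u,v\rangle^2}\|\mathcal A_u v\|^2$, where $\mathcal A$ is the O'Neill tensor of $(G,\bar g)$; since $u,v$ commute in $\mathfrak g$ the bracket contribution is governed entirely by $\mathcal A_u v$, which for a Lie group submersion with reductive complement is the vertical-to-horizontal piece of $[u,v]$, and a short computation shows its horizontal projection back down is precisely $U(u,v)-$something lying in $\mathfrak h$. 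The upstairs curvature of $u\wedge v$ vanishes because $\exp(tu),\exp(tv)$ commute and the relevant term in the homogeneous Riemannian curvature formula at a commuting pair along a geodesic direction collapses to $\|U_{\mathfrak g}(u,v)\|^2$ with $U_{\mathfrak g}$ built from $[\cdot,\cdot]_{\mathfrak m}$ only after the vertical part is absorbed — so assembling these gives exactly the claimed expression $\langle U(u,v),U(u,v)\rangle^F_u$ in the numerator, over the area $\langle u,u\rangle^F_u\langle v,v\rangle^F_u-(\langle u,v\rangle^F_u)^2$.

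Alternatively, and more self-containedly, once we know $K^F(o,u,u\wedge v)=K^{g_U}(o,u\wedge v)$ I would bypass the submersion entirely and just quote the classical formula for the sectional curvature of an invariant Riemannian metric on $G/H$ at a pair of vectors $u,v\in\mathfrak m$ with $[u,v]=0$ and $\langle[u,\mathfrak m],u\rangle=0$ (the latter saying $u$ spans a geodesic). In that formula the generic curvature of $u\wedge v$ for a naturally reductive-adapted inner product reduces, under $[u,v]=0$, to $\langle U(u,v),U(u,v)\rangle$ with $U$ exactly the symmetrized-bracket bilinear map in the statement — this is a standard computation (cf. the Riemannian case), and one checks that the terms involving $[u,v]$ or the Levi-Civita correction along the geodesic direction $u$ drop out precisely because of the geodesic condition $\langle[u,w]_{\mathfrak m},u\rangle^F_u=0$ for all $w$.

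The main obstacle I anticipate is the bookkeeping of the O'Neill $\mathcal A$-tensor versus the vertical projections $[\cdot,\cdot]_{\mathfrak m}$: one must verify carefully that the horizontal lift of $v$ at the relevant base point is again $v\in\mathfrak m$ (this uses that $\langle\cdot,\cdot\rangle^F_u$ still makes $\mathfrak h\perp\mathfrak m$, which follows from $\mathrm{Ad}(H)$-invariance together with the hypothesis), and that the correction terms from $\bar g$ not being bi-invariant do not contaminate the final formula — i.e. that only the projection $\mathrm{pr}_{\mathfrak m}$ of brackets survives. The intrinsic verification of the Riemannian sectional-curvature identity at a commuting geodesic pair, while standard, is the other place where care is needed; it is precisely there that the hypothesis $\langle[u,\mathfrak m],u\rangle^F_u=0$ gets fully used.
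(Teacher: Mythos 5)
Your overall strategy (lift $F$ to $\bar F$ via Lemma \ref{lemma-3-4}, localize, use a Riemannian submersion and a Lie--theoretic curvature formula at a commuting pair) is the same as the paper's, but the step your whole argument hinges on is wrong. You identify $g_U$ with ``a genuine $G$-invariant Riemannian metric on $G/H$ whose value at $o$ is $\langle\cdot,\cdot\rangle^F_u$'' and propose to read off its curvature from the standard homogeneous sectional curvature formula; your ``alternative, self-contained'' route rests on the same identification. Naturality of localization under isometries only makes $g_Y$ invariant under isometries that also preserve the field $Y$. The field extending $u$ whose integral curve through $o$ is a geodesic is the Killing field induced by the right-invariant field, which is not $G$-invariant, and $\langle\cdot,\cdot\rangle^F_u$ is in general not even $\mathrm{Ad}(H)$-invariant (only invariant under the stabilizer of $u$), so there is no $G$-invariant metric on $G/H$ with this value at $o$, and the ``classical formula for invariant metrics on $G/H$'' cannot be quoted. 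This is exactly why the paper computes upstairs on the group: $g_{U_1}$, the localization of $\bar F$ at the \emph{left-invariant} field $U_1$, is a left-invariant metric on $G$ (both ingredients are left-invariant), its curvature at the commuting pair is algebraic, and the result is then pushed down through the Riemannian submersion $(\mathcal{U}',g_{U'})\to(\mathcal{U},g_U)$ via the O'Neill formula. Relatedly, you invoke Theorem \ref{theorem-2-1}, which requires a geodesic \emph{field} on an open set, for fields (the Killing field $U$ downstairs, its horizontal lift $U'$ upstairs) that are only geodesic along the single integral curve through $o$, resp.\ $e$; the paper must prove the refinement, Theorem \ref{refine-shen-thm}, precisely to cover this, and your proposal skips it.

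Even granting the submersion set-up, the computations you gesture at are the actual content and are missing or garbled. First, your justification that $\exp(tu)$ is a $\bar F$-geodesic needs $\langle[u,\mathfrak{h}],u\rangle^{\bar F}_u=0$, which does not follow from ``$[u,\mathfrak{h}]\subset\mathfrak{m}$ and $\langle u,\mathfrak{h}\rangle=0$'' (the hypothesis controls $[u,\mathfrak{m}]$, not $[u,\mathfrak{h}]$); it comes from the $\mathrm{Ad}(H)$-invariance identity of \cite{DengHou2004}, as in the paper. Second, in the left-invariant curvature formula at a commuting pair one must kill the term $-\langle U(u,u),U(v,v)\rangle^{\bar F}_u$ and show that the $\mathfrak{g}$-level map satisfies $U(u,v)\in\mathfrak{m}$, so that the answer can be rewritten purely with $[\cdot,\cdot]_{\mathfrak{m}}$ and $\langle\cdot,\cdot\rangle^F_u$; the paper does this with the hypothesis together with the Cartan-tensor identity, and you address neither point. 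Third, your treatment of the O'Neill term is internally inconsistent: you assert both that the upstairs curvature of $u\wedge v$ vanishes and that it collapses to $\|U(u,v)\|^2$. In the correct bookkeeping the upstairs curvature $K^{g_{U_1}}(e,u\wedge v)$ \emph{is} the final answer, and it is the $A$-tensor term that vanishes; since the horizontal lifts $U',V'$ are not left-invariant fields, this requires the explicit computation $[U',V'](e)=0$ carried out in the paper, not the bare remark that $[u,v]=0$.
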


\subsection{A refinement of Theorem \ref{theorem-2-1}}
Theorem \ref{flag-curvature-formula} gives us an explicit formula to study the flag curvature of homogeneous Finsler spaces. Nevertheless, to get our classification, we need more elegant results. Theorem \ref{theorem-2-1} of Z.~Shen
provides a very enlightening observation. However, it is not convenient  in the homogeneous case, since a  Killing vector field on a homogeneous Finsler space
is generally not a geodesic field. On the other hand,
we only need to consider the geometric properties of a homogeneous Finsler space at the origin $o=\pi(e)$, and Killing vector fields whose integration curves at $o$ are geodesics can be easily found. This leads  to  the following refinement of Theorem \ref{theorem-2-1}.

\begin{theorem}\label{refine-shen-thm}
Let $Y$ be a vector field on a Finsler space $(M,F)$, such that $y=Y(p)\neq 0$, and $Y$
generates a geodesic of constant speed through $p$, then  $R^F_y=R^{g_Y}_y$. Furthermore, for any tangent plane
$\mathbf{P}$ in $T_p M$ containing $y$, we  have $K^F(p,y,\mathbf{P})=
K^{g_Y}(p,\mathbf{P})$.
\end{theorem}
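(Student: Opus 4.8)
The plan is to deduce this from Shen's Theorem~\ref{theorem-2-1}. Write $y=Y(p)$ and let $\gamma$ be the integral curve of $Y$ through $p$, which by hypothesis is a geodesic of constant speed. I would replace $Y$, on a neighbourhood of $p$, by a genuine geodesic field $\hat Y$ with the same $1$-jet as $Y$ at $p$, check that the localizations $g_Y$ and $g_{\hat Y}$ give the same Riemann curvature operator at $p$, and then apply Theorem~\ref{theorem-2-1} to $\hat Y$.

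To construct $\hat Y$: choose a hypersurface $\Sigma\ni p$ transverse to $y$ and a vector field $V_0$ along $\Sigma$ with $V_0(p)=y$ and $V_0$ nowhere tangent to $\Sigma$, and let $\hat Y$ be the velocity field of the geodesics issuing from $\Sigma$ with initial velocities $V_0$. By the usual geodesic flow-box argument this is a smooth geodesic field on a small neighbourhood of $p$ with $\hat Y(p)=y$; its integral curve through $p$ has initial data $(p,y)$, hence by uniqueness agrees with $\gamma$ near $p$, so $\hat Y=Y$ along $\gamma$ and the two fields have all the same $\gamma$-directional derivatives at $p$. The only part of the $1$-jet of a geodesic field at $p$ that is not free is the $y$-directional derivative; since $\gamma$ is a geodesic, $Y$ satisfies this same constraint at $p$; so by prescribing the $\Sigma$-tangential derivatives of $V_0$ at $p$ I can arrange that $Y$ and $\hat Y$ have equal $1$-jets at $p$.

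Next I would compare the localizations. The Riemann operators $R^{g_Y}_y$ and $R^{g_{\hat Y}}_y$ at $p$ are universal expressions in the metric tensors, their inverses, and their first two partial derivatives at $p$, and both are self-adjoint with respect to $\langle\cdot,\cdot\rangle^F_y = g_Y(p) = g_{\hat Y}(p)$, so it is enough to compare the quadratic forms $v\mapsto\langle R_y v,v\rangle$. Since $Y$ and $\hat Y$ have the same $1$-jet at $p$, the metrics $(g_Y)_{ij}(x)=g_{ij}(x,Y(x))$ and $(g_{\hat Y})_{ij}(x)=g_{ij}(x,\hat Y(x))$ agree to first order at $p$, so only their second-derivative terms can differ; differentiating twice, the (possibly unequal) second derivatives of $Y$ versus $\hat Y$ enter only through the factor $\partial g_{ij}/\partial y^k$, which is a constant multiple of the Cartan tensor $C_{ijk}$. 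In $\langle R_y v,v\rangle$ the derivative indices of the metric get contracted against the two $y$'s and the two $v$'s supplied by the flag; in each term that still carries a second derivative of $Y$, either one of the $y$'s meets $C_{ijk}$, where it dies by $C_{ijk}(x,y)y^k=0$ (the metric tensor being $0$-homogeneous in $y$), or both $y$'s land on the derivative slots, in which case the surviving second derivative is $y^a y^b\partial_a\partial_b Y^\ell(p)$ --- and this is determined by $F$ and the $1$-jet of $Y$ at $p$ (differentiate the geodesic equation of $\gamma$ twice), hence is the same for $\hat Y$. So $\langle R^{g_Y}_y v,v\rangle=\langle R^{g_{\hat Y}}_y v,v\rangle$ for all $v$, whence $R^{g_Y}_y=R^{g_{\hat Y}}_y$ at $p$.

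Finally, Theorem~\ref{theorem-2-1} applied to $\hat Y$ gives $R^F_y=R^{g_{\hat Y}}_y=R^{g_Y}_y$ at $p$, and the flag curvature identity follows just as in the remark after Theorem~\ref{theorem-2-1} because $g_Y(p)=\langle\cdot,\cdot\rangle^F_y$. The main obstacle is the second step: a priori $R^{g_Y}_y$ at $p$ depends on the whole $2$-jet of $Y$, while the hypothesis controls $Y$ only along the single curve $\gamma$, so one must see that the ``transverse'' part of the $2$-jet drops out; the identity $C_{ijk}(x,y)y^k=0$ together with the fact that the only $2$-jet datum of $Y$ which both survives the flag contraction and escapes that identity is the purely $\gamma$-directional one --- which the geodesic hypothesis already pins down --- is exactly what makes this go through.
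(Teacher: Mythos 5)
Your proposal is correct, but it proves the theorem by a genuinely different route than the paper. The paper never introduces an auxiliary geodesic field: it chooses coordinates with $Y=\partial_{x^1}$ and directly compares the spray coefficients $G^i$ and the connection coefficients $N^i_j=\partial_{y^j}G^i$ of $F$ and of $g_Y$ evaluated at $Y$, showing $G^i(Y)=\tilde G^i(Y)$ on a neighbourhood, $N^i_j(Y)=\tilde N^i_j(Y)$ at $p$ (the Cartan term $-2g^{il}C_{jkl}G^k$ dies because $G^k(p,Y)=0$ by the geodesic hypothesis), and $[\partial_{x^1}N^i_j](Y)=[\partial_{x^1}\tilde N^i_j](Y)$ at $p$ (using $[\partial_{x^1}G^k](p,Y)=0$ along the geodesic), and then reads off $R^F_y=R^{g_Y}_y$ from formula (\ref{formula-2-6}); in effect it re-runs Shen's computation with the geodesic hypothesis imposed only along the single integral curve. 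You instead keep Theorem \ref{theorem-2-1} as a black box and put the work into jet analysis: building a genuine geodesic field $\hat Y$ with the same $1$-jet as $Y$ at $p$ (taking $V_0=Y|_\Sigma$ matches the tangential derivatives, while the $y$-directional derivative is forced to equal $-2G^i(p,y)$ for both fields), and checking that the transverse part of the $2$-jet of $Y$ enters $\langle R_yv,v\rangle$ only through $2C_{ij\ell}\,\partial_m\partial_kY^\ell$ contracted either with a $y$ on a Cartan index (killed by $C_{ij\ell}(x,y)y^\ell=0$) or with $y^my^k$ on the derivative slots (pinned by differentiating $Y\circ\gamma=\dot\gamma$ and the geodesic equation); self-adjointness with respect to the common inner product $g_Y(p)=g_{\hat Y}(p)=\langle\cdot,\cdot\rangle^F_y$ then upgrades equality of quadratic forms to equality of operators, and Theorem \ref{theorem-2-1} applied to $\hat Y$ finishes. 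I checked the contraction claim against the explicit second-derivative terms of the Riemannian curvature of the localized metric and it does hold, so there is no gap. What your route buys is modularity and a sharper insight into what $R^{g_Y}_y$ at $p$ actually depends on (the $1$-jet of $Y$ plus its purely $y$-directional second derivative); what the paper's route buys is a shorter, self-contained coordinate computation that gives the operator identity directly, with no auxiliary field and no $2$-jet bookkeeping.
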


\begin{proof}
Let $x=(x^i)$ and $y=y^j\partial_{x^j}$ be  a standard local coordinate system  defined on an open neighborhood $\mathcal{U}$ of  $p$,
such that $Y=\partial_{x^1}$. In the following, quantities with respect to
$g_Y=(\tilde{g}_{ij}(\cdot))=(g_{ij}(Y(\cdot)))$ will be denoted with a tilde.

The  covariant derivatives with respect to $F$ can be expressed as
\begin{eqnarray}
\nabla_Y^Y Y &=& 2G^i(Y)\partial_{x^i}\nonumber\\
&=& \frac{1}{2}[g^{il}(2\partial_{x^k}g_{lj}-
\partial_{x^l}g_{jk})](Y)y^jy^k|_{y^1=1,y^2=\cdots=y^n=0}\partial_{x^i}.
\end{eqnarray}
Similarly,  the  covariant derivatives with respect to $g_Y$ can be expressed as
\begin{eqnarray}
\tilde{\nabla}_Y^Y Y &=& 2\tilde{G}^i(Y)\partial_{x^i}\nonumber\\
&=& \frac{1}{2}\tilde{g}^{il}(2\partial_{x^k}\tilde{g}_{lj}-\partial_{x^l}\tilde{g}_{jk})
y^jy^k|_{y^1=1,y^2=\cdots=y^n=0}\partial_{x^i},
\end{eqnarray}
where $n=\dim M$. Now, for $Y=\partial_{x^1}$, we have
\begin{equation}
(\partial_{x^k}g_{lj})(Y)=\partial_{x^k}\tilde{g}_{lj},
(\partial_{x^l}g_{jk})(Y)=\partial_{x^l}\tilde{g}_{jk},
\mbox{ and }g^{il}(Y)=\tilde{g}^{il}.
\end{equation}
Then the equalities $G^i(Y)=\tilde{G}^i(Y)$ hold on $\mathcal{U}$, $\forall i$. Thus on $\mathcal{U}$, we have
\begin{equation}
[\partial_{x^k}G^i](Y)=\partial_{x^k}[G^i(Y)]=
\partial_{x^k}[\tilde{G}^i(Y)]=[\partial_{x^k}\tilde{G}^i](Y),
\end{equation}
for any $i$ and $k$. Since the integration curve of $Y$  at $p$ is a geodesic, we have $G^i(Y)=\tilde{G}^i(Y)=0$, $\forall i$, on this integration curve.

Now we show that at the point $p$, for any $i,j$, the quantityies $N_j^i(Y)=[\partial_{y^j}G^i](Y)$ and $\tilde{N}_j^i(Y)=[\partial_{y^j}\tilde{G}^i](Y)$ are equal. In fact,
\begin{equation}
N_j^i=\frac{1}{2}g^{il}[\partial_{x^k}g_{jl}+\partial_{x^j}g_{kl}-
\partial_{x^l}g_{jk}]y^k-2g^{il}C_{jkl}G^k.
\end{equation}
Thus  at the point $p$ we have
\begin{eqnarray}
N_j^i(Y) &=& \frac{1}{2}[g^{il}(\partial_{x^1}g_{jl}+
\partial_{x^j}g_{1l}-\partial_{x^l}g_{j1})](Y)\nonumber\\
&=&\frac{1}{2}[\tilde{g}^{il}(\partial_{x^1}\tilde{g}_{jl}+
\partial_{x^j}\tilde{g}_{1l}-\partial_{x^l}\tilde{g}_{j1})](Y)\nonumber\\
&=& \tilde{N}_j^i(Y),
\end{eqnarray}
which proves our assertion.
A similar calculation then shows that in $\mathcal{U}$, we also have
\begin{equation}
N_j^i(Y)=\tilde{N}_j^i(Y)-2g^{il}(Y)C_{ljk}(Y)G^k(Y).
\end{equation}
 Since for any $i$,  $G^i(p, Y)=0$, the equalities
\begin{equation}
[\partial_{x^1}N_j^i](Y)=[\partial_{x^1}\tilde{N}_j^i](Y)-2g^{il}(Y)C_{ljk}(Y)
[\partial_{x^1}]G^k(Y)
\end{equation}
hold at $p$.
Now our previous argument shows that, for any $k$,  $$[\partial_{x^1}G^k](p,Y)=\partial_{x^1}G^k(p,Y)=0.$$
Therefore at $p$, we have $\partial_{x^1}N_j^i(Y)=\partial_{x^1}\tilde{N}_j^i(Y)$.

 Comparing the formula (\ref{formula-2-6}) for the Riemannian curvature
$R_k^i(y)$ and $\tilde{R}_k^i(y)$ with $y=Y(p)$, we  get $R^F_y=R^{g_Y}_y$.
Then the equality for the flag curvatures follows immediately.
\end{proof}
\subsection{The Finslerian submersion technique}
Let $(G/H,F)$ be a homogeneous
Finsler space, and $\mathfrak{g}=\mathfrak{h}+\mathfrak{m}$ an $\mathrm{Ad}(H)$-invariant decomposition for $G/H$. Let $\bar{F}$ be the Finsler metric on $G$ as in Lemma
\ref{lemma-3-4}, i.e., $\bar{F}$ is left $G$-invariant and right $H$-invariant,
  $\bar{F}|_\mathfrak{m}=F$ when $\bar{F}$ and $F$ are viewed as Minkowski norms on
$\mathfrak{g}$ and $\mathfrak{m}$ respectively, and  $F$ is subduced from $\bar{F}$ and the
natural projection $\pi$. We keep all the other notations of the previous section.

Let $u$ and $v$ be two linearly independent tangent vectors in $\mathfrak{m}=T_o(G/H)$ and assume
that $u$ satisfies the condition
\begin{equation}\label{formula-4-22}
\langle [u,\mathfrak{m}]_\mathfrak{m},u\rangle^F_{u}=0.
\end{equation}
In the following we present a method to calculate the flag curvature $K^F(o,u,u\wedge v)$.

First note that there exist a left invariant vector field $U_1$ and a right invariant vector field $U_2$ on $G$ with
$U_1(e)=U_2(e)=u\in\mathfrak{m}\subset\mathfrak{g}=T_e G$. Note also that $U_2$ can be
pushed down through the projection map which defines a Killing vector field
$U$ of $(G/H,F)$. Let $U'$ be the horizonal lift of $U$ with respect to the submersion
$\pi:(G,\bar{F})\rightarrow(G/H,F)$. Since $U(o)=u\neq 0$, $U'$ is smooth on an open neighborhood of $e$. Further, there exist an open neighborhood $\mathcal{U}'$ of
$e$ and $\mathcal{U}$ of $o$, such that $\pi$ is a Riemannian submersion between
$(\mathcal{U}',g_{U'})$ and $(\mathcal{U},g_U)$.

Let $V_1$ be the left invariant vector field on $G$ extending
$v\in\mathfrak{m}\subset\mathfrak{g}=T_e G$. Let $V'$ be the horizonal shift of $V_1$ in $\mathcal{U}'$
 with respect to the submersion $\pi:(\mathcal{U}',g_{U'})\rightarrow(\mathcal{U},g_U)$. When $\mathcal{U}'$ and $\mathcal{U}$ are sufficiently small, $V'$ is smooth and non-vanishing
 in $\mathcal{U}'$.

We first deduce the following lemma.

\begin{lemma}\label{lemma-4-2}
The following assertions holds:
\begin{description}
\item{\rm (1)} The horizonal lift of $u$ with respect to the submersion $\pi_*:(\mathfrak{g},\bar{F})
\rightarrow(\mathfrak{m},F)$ is exactly $u$ itself.
\item{\rm (2)} The vector field $U$ generates a geodesic of $(G/H,F)$ through $o$. Moreover,  the
vector fields $U'$, $U_1$ and $U_2$ generate the same geodesic of $(G,\bar{F})$ through $e$.
\item{\rm (3)} The vectors $u$ and $v$ in $\mathfrak{m}$ are the horizonal lift of themselves with respect to the
submersion $\pi_*:(\mathfrak{g},\langle\cdot,\cdot\rangle^{\bar{F}}_u)
\rightarrow(\mathfrak{m},\langle\cdot,\cdot\rangle^F_u)$.
\end{description}
\end{lemma}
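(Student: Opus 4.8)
The plan is to verify the three assertions in order, each of which follows from the variational characterization of horizontal lifts together with the compatibility of $\bar F$ with $F$ built into Lemma \ref{lemma-3-4}.

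For assertion (1): by Lemma \ref{lemma-3-4} the indicatrix $I_{\bar F}$ is tangent to the cylinder $\mathfrak h\times I_F$ along $I_F\subset\mathfrak m$, and $\bar F|_\mathfrak m=F$. Hence for $u\in\mathfrak m$ the fiber $\pi_*^{-1}(u)=u+\mathfrak h$ meets $I_{\bar F}\cdot\bar F(u)$ (the sphere of $\bar F$-radius $\bar F(u)=F(u)$) exactly at $u$, so $u$ is the unique minimizer of $\bar F$ over that fiber. By (\ref{formula-3-8}) applied to $\pi_*$ this says $u$ is the horizontal lift of itself; equivalently, one checks the defining condition $\langle u,\ker\pi_*\rangle^{\bar F}_u=\langle u,\mathfrak h\rangle^{\bar F}_u=0$, which is immediate since the tangency of the indicatrices at $u$ forces the $\bar F$-normal direction at $u$ to be $\mathfrak h$-orthogonal to $u$, while $\langle u,\mathfrak h\rangle^{\bar F}_u$ measures exactly that pairing.

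For assertion (2): the right-invariant field $U_2$ projects to the Killing field $U$ on $(G/H,F)$. A Killing field whose value at a point is a geodesic direction need not be a geodesic field globally, but its integration curve through that point is a geodesic; here the relevant fact is that the one-parameter group generated by $U_2$ is $R_{\exp(tu)}$-conjugate, so its orbit $\exp(tu)H$ through $o$ is the image of the one-parameter subgroup $\exp(tu)$, and condition (\ref{formula-4-22}) is precisely the criterion (a homogeneous-geodesic / geodesic-lemma condition, cf.\ the Killing field argument) guaranteeing that $t\mapsto\exp(tu)H$ is a geodesic of $(G/H,F)$ through $o$. For the statement about $G$: $U_1$ and $U_2$ agree at $e$, and the integration curves of a left-invariant and of a right-invariant field with the same value at $e$ both equal the one-parameter subgroup $\exp(tu)$; this subgroup is a geodesic of the bi-invariant-type metric behaviour encoded in $\bar F$ exactly because $u\in\mathfrak m$ and $\bar F|_\mathfrak m=F$ interact with the submersion so that the horizontal lift of the geodesic $\exp(tu)H$ is $\exp(tu)$ itself. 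Finally $U'$, being the horizontal lift of the geodesic field $U$ near $o$, has integration curve through $e$ equal to the horizontal lift of $\exp(tu)H$, which by the previous sentence is again $\exp(tu)$; hence all three fields generate the same geodesic through $e$.

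For assertion (3): by Proposition \ref{proposition-3-1}, since $u$ is (by (1)) the horizontal lift of $u$, the map $\pi_*\colon(\mathfrak g,\langle\cdot,\cdot\rangle^{\bar F}_u)\to(\mathfrak m,\langle\cdot,\cdot\rangle^F_u)$ is a metric (Euclidean) submersion, so its horizontal subspace is the $\langle\cdot,\cdot\rangle^{\bar F}_u$-orthogonal complement of $\mathfrak h$ in $\mathfrak g$. That $u$ is horizontal is again (1). That $v$ is horizontal means $\langle v,\mathfrak h\rangle^{\bar F}_u=0$; this follows because the tangency of $I_{\bar F}$ to $\mathfrak h\times I_F$ along $I_F$ makes the Hessian $\langle\cdot,\cdot\rangle^{\bar F}_u$ at the point $u\in\mathfrak m$ block-diagonal with respect to $\mathfrak m\oplus\mathfrak h$ — precisely the content of Proposition \ref{proposition-3-1} for the linear submersion — and $v\in\mathfrak m$. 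The main obstacle is the careful bookkeeping in (2): distinguishing "geodesic field" from "a field generating one geodesic", and pushing the horizontal-lift-of-geodesics correspondence (Theorem \ref{theorem-3-2} and the remarks preceding it) down to the single curve $\exp(tu)$ while invoking condition (\ref{formula-4-22}) at exactly the right moment; the rest is linear algebra on the Hessian $\langle\cdot,\cdot\rangle^{\bar F}_u$.
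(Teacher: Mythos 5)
Your proof is correct in substance, and for parts (1) and (3) it is essentially the paper's own argument: everything rests on the tangency of $I_{\bar F}$ to the cylinder $\mathfrak{h}\times I_F$ along $I_F$ built into Lemma \ref{lemma-3-4}. For part (2), however, you take a genuinely different route on the group side. The paper shows directly that the right-invariant field $U_2$ is a Killing field of $(G,\bar F)$ whose $\bar F$-length function has a critical point at $e$ (this uses condition (\ref{formula-4-22}) together with the tangency to see that the $\mathrm{Ad}(G)$-orbit of $u$ is tangent to $I_{\bar F}$ at $u$), and then applies the Killing-field/critical-point lemma of \cite{DengXu2012} both on $G/H$ and on $G$; you instead obtain the base geodesic from the homogeneous-geodesic criterion (condition (\ref{formula-4-22}) is indeed exactly that criterion, whose standard proof is the same Killing-field argument the paper cites) and then transport it upstairs by observing that $\exp(tu)$ is the horizontal lift of $\exp(tu)\cdot o$ and that horizontal lifts of geodesics are geodesics (the correspondence recorded before Theorem \ref{theorem-3-2}). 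Your route buys a cleaner treatment of the $G$-side, since you never need the $\mathrm{Ad}(G)$-orbit tangency, but two steps should be tightened: (i) that the horizontal lift of $\exp(tu)\cdot o$ through $e$ really is $\exp(tu)$ does not follow merely from ``$\bar F|_{\mathfrak m}=F$ interacting with the submersion''; it follows from part (1) combined with the equivariance $\pi_*|_{T_gG}=g_*\circ\pi_*|_{T_eG}\circ(L_{g^{-1}})_*$ and the left invariance of $\bar F$, which shows the velocity $(L_{\exp(t_0u)})_*u$ is horizontal at every time $t_0$, not only at $t_0=0$; and (ii) in part (3), the block-diagonality $\langle\mathfrak m,\mathfrak h\rangle_u^{\bar F}=0$ is not the content of Proposition \ref{proposition-3-1}, which only says that the horizontal space (the $\langle\cdot,\cdot\rangle_u^{\bar F}$-orthocomplement of $\mathfrak h$) maps isometrically onto $(\mathfrak m,\langle\cdot,\cdot\rangle_u^F)$ without identifying it with $\mathfrak m$; the correct reason is the one the paper computes, namely that $d(\bar F^2)$ annihilates $\mathfrak h$ at every point of $\mathfrak m$ (tangency along all of $I_F$, extended by homogeneity), so that $\langle v,w\rangle_u^{\bar F}=\tfrac12\,\partial_s\partial_t\bar F^2(u+sv+tw)|_{s=t=0}=0$ for all $v\in\mathfrak m$ and $w\in\mathfrak h$. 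With these two points made explicit, your argument is complete.
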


\begin{proof}
(1)\quad By Lemma \ref{lemma-3-4},  $F$ is both the restriction and the
subduced metric of $\bar{F}$. Then the assertion in (1) follows.

(2)\quad Without losing generality, we can  assume that $u$ is a unit vector for $\bar{F}$. Then the assumption (\ref{formula-4-22}) and the property of $\bar{F}$ implies that the $\mathrm{Ad}(G)$-orbit of $u$
is tangent to both the indicatrix $I_{\bar{F}}$ of $\bar{F}$
and
the cylinder $\mathfrak{h}\times I_{F}$ for the indicatrix $I_F$ of $F$ at $u$.
The ${F}$-length of $U$ at any $\pi(g)$ is ${F}(\mathrm{pr}_{\mathfrak{m}}(\mathrm{Ad}(g)u))$. So for $g=\exp (tX)$, $X\in\mathfrak{g}$, the smooth function $F(\mathrm{pr}_{\mathfrak{m}}(\mathrm{Ad}(\exp (tX))u))$ of $t$ has  zero
derivative at $t=0$, that is,
\begin{equation}
F(\mathrm{pr}_{\mathfrak{m}}(\mathrm{Ad}(\exp (tX))u))=F(u)+o(t),
\end{equation}
 where $o(t)$ denotes an infinitesimal quantity of higher order then $t$.
This implies that the $F$-length function
of $U$ has a critical point at $o$. Since $U$ is a Killing vector field,
 by Lemma 3.1 of \cite{DengXu2012},  $U$ generates a geodesic of $(G/H,{F})$ through the critical point $o$.
Similarly,   the vector field $U_2$ is a Killing vector field for $(G,\bar{F})$, and its $\bar{F}$-length function has a critical point at $e$, so it generates a geodesic
of $(G,\bar{F})$ through $e$. It is easily seen that $U_1$ and $U_2$ generate the  same geodesic $\exp(tu)$.
Since $U'$
is the smooth horizonal lift of $U$ on an open neighborhood of $e$,  $U'$ also generates a
geodesic of $(G,\bar{F})$ at $e$. By (1) of this lemma, $U_1(e)=U'(e)=u$,
 so $U'$ also generates the  geodesic $\exp(tu)$ through $e$.

(3)\quad At any point in $\mathfrak{m}$, the derivative of $\bar{F}^2$ is equal to $0$ in the directions
of $\mathfrak{h}$. So by (\ref{formula-2-2}), for any $ w\in\mathfrak{h}$, we have
\begin{equation}
\langle v,w\rangle^{\bar{F}}_u=\frac{1}{2}
\frac{\partial}{\partial s}[\frac{\partial}{\partial t}
\bar{F}^2(u+sv+tw)|_{t=0}]|_{s=0}=0,
\end{equation}
which proves the assertion for $v$. The assertion for $u$ is obvious.
\end{proof}

By (3) of Lemma \ref{lemma-4-2},
the sectional curvature formula
for the Riemannian submersion
$
\pi:(\mathcal{U}',g_{U'})\rightarrow (\mathcal{U},g_U)
$ indicates that
\begin{equation}\label{formula-4-26}
K^{g_U}(o,u,u\wedge v)=K^{g_{U'}}(e,u,u\wedge v)+\frac{3||A(u,v)||^2}{||u\wedge v||^2},
\end{equation}
in which the norms of vectors are defined by the inner product $\langle\cdot,\cdot\rangle_u^{\bar{F}}$ from $g_{U'}(e,\cdot)=g_{U_1}(e,\cdot)$, and $A(u,v)$ is the value of
the vertical component of $\frac{1}{2}[U',V'](e)$.  Now applying (2) of Lemma
\ref{lemma-4-2} and Theorem \ref{refine-shen-thm}, we have
$K^{g_U}(o,u,u\wedge v)=K^F(o,u,u\wedge v)$ and
$K^{g_{U'}}(e,u,u\wedge v)=K^{\bar{F}}(e,u,u\wedge v)=K^{g_{U_1}}(e,u,u\wedge v)$.
This proves the following
\begin{lemma}\label{lemma-4-3}
Keep all the notations as above. Then
\begin{equation}\label{3000}
K^{F}(o,u,u\wedge v)=K^{g_{U_1}}(e,u\wedge v)+\frac{3||A(u,v)||^2}{||u\wedge v||^2}.
\end{equation}
\end{lemma}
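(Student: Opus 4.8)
The plan is to assemble, into a single clean derivation, the O'Neill curvature identity for Riemannian submersions together with the refinement of Shen's theorem (Theorem~\ref{refine-shen-thm}) and the three structural facts of Lemma~\ref{lemma-4-2}; the displayed formula (\ref{formula-4-26}) and the two sentences after it already sketch this, and what remains is to justify and order the ingredients.

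First I would apply the O'Neill sectional curvature formula to the map $\pi:(\mathcal{U}',g_{U'})\rightarrow(\mathcal{U},g_U)$. At the relevant points one has $g_{U'}(e,\cdot)=g_{U_1}(e,\cdot)=\langle\cdot,\cdot\rangle^{\bar F}_u$ and $g_U(o,\cdot)=\langle\cdot,\cdot\rangle^F_u$, and by Proposition~\ref{proposition-3-1} the induced map $\pi_*:(\mathfrak g,\langle\cdot,\cdot\rangle^{\bar F}_u)\rightarrow(\mathfrak m,\langle\cdot,\cdot\rangle^F_u)$ is a Euclidean submersion with kernel $\mathfrak h$; since $U(o)=u\neq 0$ makes $U'$ smooth near $e$, after shrinking $\mathcal{U}'$ and $\mathcal{U}$ the map $\pi$ is a genuine Riemannian submersion whose fibers are the $H$-orbits. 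Part~(3) of Lemma~\ref{lemma-4-2} says that $u$ and $v$ are $\langle\cdot,\cdot\rangle^{\bar F}_u$-orthogonal to $\mathfrak h$, hence horizontal for this submersion and equal to their own horizontal lifts, so that $U'(e)=u$ and $V'(e)=v$. O'Neill's formula then delivers exactly (\ref{formula-4-26}), with $A(u,v)$ the vertical component of $\tfrac12[U',V'](e)$ and all norms taken in $\langle\cdot,\cdot\rangle^{\bar F}_u$.

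Next I would convert the two Riemannian sectional curvatures in (\ref{formula-4-26}) into Finslerian flag curvatures. By part~(2) of Lemma~\ref{lemma-4-2}, $U$ generates a geodesic of $(G/H,F)$ through $o$ with $U(o)=u\neq 0$, so Theorem~\ref{refine-shen-thm} gives $R^F_u=R^{g_U}_u$ and therefore $K^{g_U}(o,u,u\wedge v)=K^F(o,u,u\wedge v)$. In the same way $U'$, and also the left-invariant field $U_1$, generates the geodesic $t\mapsto\exp(tu)$ of $(G,\bar F)$ through $e$; applying Theorem~\ref{refine-shen-thm} once with $Y=U'$ and once with $Y=U_1$ gives $K^{g_{U'}}(e,u,u\wedge v)=K^{\bar F}(e,u,u\wedge v)=K^{g_{U_1}}(e,u\wedge v)$, the last curvature being flag-pole independent because $g_{U_1}$ is Riemannian. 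Substituting these identities into (\ref{formula-4-26}) yields (\ref{3000}).

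The step I expect to be the main obstacle is precisely the one handled in the second paragraph: legitimizing the use of O'Neill's formula. One must be careful that the horizontal lift defining $U'$ and the horizontal shift defining $V'$ are taken with respect to the \emph{Riemannian} submersion of the localized metrics $g_{U'}\rightarrow g_U$, not merely the Finslerian submersion $\bar F\rightarrow F$; that this localized submersion is well defined and smooth on small enough neighborhoods; that $\ker\pi_*=\mathfrak h$ is precisely the vertical space; and, most delicately, that $u$ and $v$ are indeed horizontal there — which is the content of Lemma~\ref{lemma-4-2}(3) and ultimately rests on $\bar F^2$ having vanishing $\mathfrak h$-directional derivatives at points of $\mathfrak m$. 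Once these points are secured, the remainder is a formal substitution.
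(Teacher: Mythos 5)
Your proposal is correct and follows essentially the same route as the paper: apply the O'Neill sectional curvature formula to the Riemannian submersion $\pi:(\mathcal{U}',g_{U'})\rightarrow(\mathcal{U},g_U)$, using Lemma \ref{lemma-4-2}(3) to see that $u$ and $v$ are their own horizontal lifts, and then convert the three Riemannian curvatures via Lemma \ref{lemma-4-2}(2) together with Theorem \ref{refine-shen-thm} applied to $U$, $U'$ and $U_1$. The extra care you devote to checking that the localized submersion $g_{U'}\rightarrow g_U$ is a genuine Riemannian submersion with vertical space $\mathfrak{h}$ is exactly what the paper asserts in setting up (\ref{formula-4-26}).
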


Since both $\bar{F}$ and $U_1$ are left $G$-invariant, $g_{U_1}$ is a left
invariant Riemannian metric on $G$, defined by the inner product $\langle\cdot,\cdot\rangle_u^{\bar{F}}$ on $\mathfrak{m}$, hence $K^{g_{U_1}}(e,u\wedge v)$ can be
expressed with the sectional curvature formula of Riemannian homogeneous spaces, which is very simple and useful in the case that $u$ and $v$ are a commuting
pair in $\mathfrak{m}$.

\subsection{The proof of Theorem \ref{flag-curvature-formula}}
Now we continue the argument of the previous subsection.
 Assume further that $[u,v]=0$. Then we have
\begin{equation}\label{4000}
K^{g_{U_1}}(e,u\wedge v)=\frac{\langle U(u,v),U(u,v)\rangle_u^{\bar{F}}
-\langle U(u,u),U(v,v)\rangle_u^{\bar{F}}}{\langle u,u\rangle_u^{\bar{F}}
\langle v,v\rangle_u^{\bar{F}}-{\langle u,v\rangle_u^{\bar{F}}}
\langle u,v\rangle_u^{\bar{F}}},
\end{equation}
where $U:\mathfrak{g}\times\mathfrak{g}\rightarrow\mathfrak{g}$ is the bi-linear symmetric mapping defined by
\begin{equation*}
\langle U(w_1,w_2),w_3\rangle_u^{\bar{F}}=\frac{1}{2}
(\langle[w_3,w_1],w_2\rangle_u^{\bar{F}}+\langle[w_3,w_2],w_1
\rangle_u^{\bar{F}}).
\end{equation*}
Now we prove that $U(u,v)\in\mathfrak{m}$. By
Theorem 1.3 of \cite{DengHou2004}, we have
\begin{equation}\label{5000}
	\langle[w,u],v\rangle^{\bar{F}}_u+\langle u,[w,v]\rangle^{\bar{F}}_u
	+2C_u^{\bar{F}}(u,v,[w,u])=0,
\end{equation}
for any $w\in\mathfrak{h}$. Thus the Cartan tensor term $C_u^{\bar{F}}(u,\cdot,\cdot)$ vanishes. So $\langle U(u,v),\mathfrak{h}\rangle_u^{\bar{F}}=0$, i.e., $U(u,v)\in\mathfrak{m}$,
 and it can be determined by
\begin{eqnarray*}
\langle U(u,v),w\rangle_u^F &=& \langle U(u,v),w\rangle_u^{\bar{F}}\\
&=&\frac12(\langle [w,u],v\rangle_u^{\bar{F}}
+\langle [w,v],u\rangle_u^{\bar{F}})\\
&=&\frac12(\langle [w,u]_\mathfrak{m},v\rangle_u^{{F}}
+\langle [w,v]_\mathfrak{m},u\rangle_u^{{F}}),
\end{eqnarray*}
for any $w\in\mathfrak{m}$.

Using a similar equality as (\ref{5000}), we can prove that
$\langle [u,\mathfrak{h}],u\rangle_u^{\bar{F}}=0$. So we have
\begin{eqnarray*}
\langle [u,\mathfrak{g}],u\rangle_u^{\bar{F}} &\subset&
\langle [u,\mathfrak{m}],u\rangle_u^{\bar{F}}+\langle [u,\mathfrak{h}],u\rangle_u^{\bar{F}}\\
&\subset&
\langle [u,\mathfrak{m}]_\mathfrak{m},u\rangle_u^{\bar{F}}
=\langle[u,\mathfrak{m}]_\mathfrak{m},u\rangle_u^F\\
&=&0.
\end{eqnarray*}
Thus $U(u,u)=0$.

To summarize, (\ref{4000}) can be simplified as
\begin{equation*}
K^{g_{U_1}}(e,u\wedge v)=\frac{\langle U(u,v),U(u,v)\rangle_u^{F}}{\langle u,u\rangle_u^{F}
\langle v,v\rangle_u^{F}-{\langle u,v\rangle_u^{F}}\langle u,v\rangle_u^{F}},
\end{equation*}
where $U(u,v)\in\mathfrak{m}$ is the bi-linear symmetric mapping defined by
\begin{equation}\label{5999}
\langle U(u,v),w\rangle_u^{F}=\frac{1}{2}
(\langle[w,u]_\mathfrak{m},v\rangle_u^{F}+\langle[w,v]_\mathfrak{m},u
\rangle_u^{F}).
\end{equation}

To prove Theorem \ref{flag-curvature-formula}, we only need to prove
that the $A(u,v)$-term  in Lemma \ref{lemma-4-3} vanishes when $[u,v]=0$.
Let $X_1,\ldots,X_N$ be a basis of the space of all left invariant
vector fields on $G$ such that $X_1(e)=u$ and $X_2(e)=v$ and denote
the smooth vector fields $U'$ and $V'$ around $e$ as
$U'(g)=u^i(g)X_i$ and $V'(g)=v^i(g)X_i$. Then we have
\begin{equation}
[U',V'](e)=[u,v]+\frac{d}{dt}v^i(\exp(tu))|_{t=0}X_i-\frac{d}{dt}u^i(\exp(tv))|_{t=0}X_i.
\end{equation}
Since $[u,v]=0$, $U'(\exp(tv))\equiv X_1$ and $V'(\exp(tu))\equiv X_2$, we have
$[U',V'](e)=0$. Thus its vertical factor $A(u,v)$ vanishes as well, which completes
the proof of Theorem \ref{flag-curvature-formula}.

\subsection{An intrinsic proof of Theorem \ref{flag-curvature-formula}}
The metric $\bar{F}$ we have constructed on $G$ is for building up the Finslerian submersion technique. But it is not intrinsic for the geometry of
the homogeneous space, and it does not appear in the flag curvature formula. This fact implies that there should be an intrinsic
proof of Theorem \ref{flag-curvature-formula}.

In \cite{Huang2013},  the third author of this work
uses invariant frames to give explicit formulas for curvatures of homogeneous Finsler spaces.
This method can be used to give an intrinsic proof of the theorem.

Recall that  the {\it spray vector field} $\eta:\mathfrak{m}\backslash\{0\}\rightarrow\mathfrak{m}$
 is defined by
\begin{equation*}
	\langle\eta(u), w\rangle_u^F=\langle u,[w,u]_{\mathfrak{m}}\rangle_u^F, \quad \forall v\in\mathfrak{m}.
\end{equation*}
Meanwhile, the {\it connection operator} $N:(\mathfrak{m}\backslash\{0\})\times\mathfrak{m}\rightarrow\mathfrak{m}$
is a linear operator
on $\mathfrak{m}$ determined by
\begin{equation*}
	\begin{aligned}
	2\langle N(u,w_1),w_2\rangle_u^F=\langle [w_2,w_1]_{\mathfrak{m}}, u\rangle_u^F & + \langle [w_2,u]_{\mathfrak{m}}, w_1\rangle_u^F+\langle [w_1,u]_{\mathfrak{m}},w_2\rangle_u^F\\
	& - 2C^F_u(w_1,w_2,\eta(u)),\quad \forall w_1,w_2\in\mathfrak{m}.
	\end{aligned}	
\end{equation*}
Using these two notions, Huang proved the following formula
for Riemannian curvature $R_u: T_o(G/H)\rightarrow T_o(G/H)$,
\begin{equation}\label{6000}
	\langle R_u(w),w\rangle_u^F
	=\langle [[w,u]_{\mathfrak{h}},w],u\rangle_u^F
	+\langle \tilde{R}(u)w,w\rangle_u^F,\quad w\in\mathfrak{m},
\end{equation}
where the linear operator $\tilde{R}(u):T_o(G/H)\rightarrow T_o(G/H)$
is given by
$$\tilde{R}(u)=D_{\eta(u)}N(u,w)-N(u,N(u,w))+
N(u,[u,w]_\mathfrak{m})-[u,N(u,w)]_\mathfrak{m},$$
 and $D_{\eta(u)}N(u,w)$ is the derivative of $N(\cdot,w)$ at $u\in\mathfrak{m}\backslash\{0\}$ in the direction of $\eta(u)$ (in particular,
it is $0$ when $\eta(u)=0$).

Now suppose that $u\in\mathfrak{m}\backslash\{0\}$ satisfies (\ref{formula-4-22}), i.e., $\langle[u,\mathfrak{m}],u\rangle_u^F=0$. Then it is easy to see that
$\eta(u)=0$. For any $v\in\mathfrak{m}$ which commutes with $u$,
we have $N(u,v)=U(u,v)$ in (\ref{5999}). Thus
\begin{eqnarray*}
\langle R_u (v),v\rangle_u^F &=&
-\langle N(u,N(u,v)),v\rangle_u^F
-\langle [u,N(u,v)],v\rangle_u^F\\
&=& -\frac12(\langle [v,N(u,v)]_\mathfrak{m},u\rangle_u^F+
\langle[N(u,v),u]_\mathfrak{m},v\rangle_u^F)
+\langle [N(u,v),u],v\rangle_u^F\\
&=& \frac12(\langle[N(u,v),v],u\rangle_u^F+
\langle[N(u,v),u],v\rangle_u^F)\\
&=&\langle U(u,v),N(u,v)\rangle_u^F=\langle U(u,v),U(u,v)\rangle_u^F.
\end{eqnarray*}
From this calculation, the flag curvature formula in Theorem \ref{flag-curvature-formula} follows immediately.

\section{Proof of Theorem \ref{thm1}}
In this section we complete the proof of Theorem \ref{thm1}.
Let $F$ be a positively curved homogeneous metric on the compact homogeneous space $G/H$, in which $G$ is a compact
connected simply connected Lie group, and $H$ is a closed connected subgroup of $G$.
Keep all relevant notations. Let $\mathfrak{g}=\mathfrak{h}+\mathfrak{m}$
be the orthogonal decomposition with respect to a bi-invariant inner product
on $\mathfrak{g}$.
Let $\mathfrak{t}$ be a Cartan subalgebra of $\mathfrak{g}$ such that
$\mathfrak{t}\cap\mathfrak{h}$ is a Cartan subalgebra of $\mathfrak{h}$.
Let $T$ and $T_H$ be the maximal torus subgroup of $G$ and $H$ corresponding to $\mathfrak{t}$ and $\mathfrak{t}\cap\mathfrak{h}$ respectively.

We will use the following lemma to prove the inequality between the ranks of $G$ and $H$ (see Theorem \ref{theorem-5-2}).

\begin{lemma}\label{lemma-5-1}
Keep all the above notations.  Let $N_G(T_H)$ and $N_H(T_H)$ be the normalizers of $T_H$ in $G$ and $H$ respectively, i.e.,
\begin{eqnarray*}
N_G(T_H)&=&\{g\in G|g T_H g^{-1}=T_H\}, \mbox{ and}\\
N_H(T_H)&=&\{g\in H|g T_H g^{-1}=T_H\}=N_G(T_H)\cap H.
\end{eqnarray*}
Then the $\mathrm{Ad}(N_G(T_H))$-orbit $N_G(T_H)\cdot o=N_G(T_H)/N_H(T_H)\subset G/H$
of $o=\pi(e)$ consists of all the common fixed
points of $T_H$.
\end{lemma}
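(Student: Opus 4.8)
The statement is essentially a standard fact about normalizers of tori acting on homogeneous spaces, transplanted to the present setting. I would argue by a double inclusion. Write $p \in G/H$ for a point and recall that $p = \mathrm{Ad}(g)\cdot o$ corresponds to the coset $gH$; a point $p$ is fixed by all of $T_H$ precisely when $T_H \cdot gH = gH$, i.e. $g^{-1}T_H g \subset H$.

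\textbf{Step 1: the orbit lies in the fixed-point set.} Let $p = nH$ with $n \in N_G(T_H)$. Then $n^{-1}T_H n = T_H \subset H$, so $t\cdot nH = nH$ for every $t \in T_H$, i.e. $p$ is a common fixed point of $T_H$. Thus $N_G(T_H)\cdot o \subset (G/H)^{T_H}$. The identification $N_G(T_H)\cdot o = N_G(T_H)/N_H(T_H)$ is just the orbit-stabilizer description, using $N_H(T_H) = N_G(T_H)\cap H$ as the isotropy subgroup at $o$.

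\textbf{Step 2: the fixed-point set lies in the orbit.} Conversely, suppose $p = gH$ is fixed by all of $T_H$, so that $g^{-1}T_H g \subset H$. Now $g^{-1}T_H g$ is a torus in $H$, hence contained in some maximal torus of $H$; since all maximal tori of $H$ are conjugate in $H$ (and $T_H$ is one of them, being the maximal torus corresponding to $\mathfrak{t}\cap\mathfrak{h}$), there is $h \in H$ with $h\,(g^{-1}T_H g)\,h^{-1} \subset T_H$. Here I must be slightly careful: $g^{-1}T_H g$ need not be a \emph{maximal} torus of $H$, only a subtorus, so I instead enlarge it to a maximal torus $S$ of $H$ containing it, pick $h\in H$ with $hSh^{-1} = T_H$, and deduce $h g^{-1} T_H g h^{-1} \subset T_H$. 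Set $n = g h^{-1}$; then $n^{-1} T_H n \subset T_H$. Since conjugation by $n^{-1}$ is an automorphism of $G$ carrying the torus $T_H$ into itself, and $T_H$ is compact and connected, the inclusion $n^{-1}T_H n \subset T_H$ of tori of the same dimension forces equality $n^{-1}T_H n = T_H$, so $n \in N_G(T_H)$. Finally $nH = g h^{-1} H = gH = p$ because $h \in H$, so $p \in N_G(T_H)\cdot o$. This establishes the reverse inclusion and completes the proof.

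\textbf{Main obstacle.} The only genuinely delicate point is Step 2: passing from "$g^{-1}T_H g$ is \emph{some} subtorus of $H$" to "$g^{-1}T_H g$ can be conjugated \emph{within} $H$ into $T_H$." This needs the conjugacy of maximal tori in the compact connected group $H$ together with the observation that a subtorus always sits inside a maximal torus, plus the rigidity fact that a surjection of a torus onto a subtorus of equal dimension is an isomorphism. None of these are hard, but they are exactly where the hypothesis that $H$ is connected (and $T_H$ is a genuine maximal torus of $H$, via the choice of $\mathfrak{t}$ with $\mathfrak{t}\cap\mathfrak{h}$ Cartan in $\mathfrak{h}$) gets used, so I would state them explicitly rather than fold them into the computation.
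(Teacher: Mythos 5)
Your proposal is correct and follows essentially the same route as the paper: the easy inclusion via the normalizer property, and the converse by conjugating the torus $g^{-1}T_Hg\subset H$ back onto $T_H$ by an element of $H$ (using conjugacy of maximal tori in the connected group $H$), the only difference being that you spell out the maximal-torus and dimension-rigidity details that the paper leaves implicit.
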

\begin{proof}
Given $x\in N_G(T_H)\cdot o$, let $x=g\cdot o$ for some
$g\in N_G(T_H)$.  Then by the definition of normalizer,
for any $g_1\in T_H$, there exists $g_2\in T_H\subset H$ such that
$g_1 g=g g_2$. Hence we have
$$g_1\cdot x=g_1 g\cdot o=gg_2\cdot o=g\cdot o=x.$$
This shows that $x$ is fixed by all the elements in $T_H$.

Conversely, suppose $x\in G/H$ is fixed by any element in $T_H$. We shall
prove that $x\in N_G(T_H)\cdot o$.  Suppose $x=g\cdot o$ for some $g\in G$,
such that $g_1\cdot x=x$ for all $g_1\in T_H$. Then
$g^{-1} g_1 g \in H$, $\forall g_1\in T_H$, i.e., $g^{-1} T_H g\subset H$.
There exists $g_2\in H$, such that
$(gg_2)^{-1}T_H (gg_2)=g_2^{-1}(g^{-1}T_H g)g_2=T_H$, i.e.
$gg_2\in N_G(T_H)$. Thus $x= g\cdot o=gg_2\cdot o\in N_G(T_H)\cdot o$.
This completes the proof of the lemma.
\end{proof}

The following rank equality is a crucial observation for the classification of
positively curved homogeneous spaces in both Riemannian geometry and Finsler
geometry.

\begin{theorem}\label{theorem-5-2}
Let $G$ be a compact connected simply connected Lie group and $H$ a closed
subgroup such that $G$ acts effectively on $G/H$.  Assume $G/H$ admits a
$G$-homogeneous Finsler metric $F$ with positive flag curvature. Then $\mathrm{rk}G\leq \mathrm{rk}H+1$.
Moreover, we have the following.
\begin{enumerate}
\item The orbit of $N_G(T_H)$ is totally geodesic in $(G/H,F)$.
\item If $\dim G/H$ is even, then $T=T_H$, i.e.,
$\mathrm{rk}G=\mathrm{rk}H$.
\item If $\dim G/H$ is odd, then the identity component of $N_G(T_H)/T_H$ is isomorphic to
$\mathrm{U}(1)$, $\mathrm{SU}(2)$ or $\mathrm{SO}(3)$.  In this case
$\mathrm{rk}G=\mathrm{rk}H+1$.
\end{enumerate}
\end{theorem}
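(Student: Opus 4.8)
The plan is to reduce everything to the totally geodesic submanifold $N:=N_G(T_H)\cdot o=N_G(T_H)/N_H(T_H)$ provided by Lemma \ref{lemma-5-1}, and to exploit the fact that a totally geodesic submanifold of a positively curved Finsler space is again positively curved. First I would prove assertion (1): the set of common fixed points of the torus $T_H$ acting on $(G/H,F)$ is a closed totally geodesic submanifold. This is a standard fact in the Riemannian setting, and the Finslerian analogue follows by the localization trick of Theorem \ref{refine-shen-thm}: the fixed point set of a group of isometries is totally geodesic because at any of its points the second fundamental form, being equivariant under an isotropy action with no nonzero fixed vectors in the normal directions at the relevant level, must vanish; alternatively, a geodesic of $(G/H,F)$ tangent to $N$ at a point of $N$ is carried into itself by $T_H$ (uniqueness of geodesics), hence stays in $N$. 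Combining with Lemma \ref{lemma-5-1}, $N=N_G(T_H)\cdot o$ is totally geodesic, and the induced metric on $N$ has positive flag curvature.

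Next I would establish the rank inequality $\mathrm{rk}\,G\le \mathrm{rk}\,H+1$. Suppose not, so $\dim(\mathfrak{t}\ominus(\mathfrak{t}\cap\mathfrak{h}))\ge 2$. The torus $T$ acts on $G/H$ with $o$ as a fixed point (since $\mathfrak{t}\cap\mathfrak{h}$ is a Cartan subalgebra of $\mathfrak{h}$, actually one needs $T_H\subset H$, which holds), and more to the point the identity component of $N_G(T_H)$ contains $T$, so $N$ contains the orbit $T\cdot o$, which is a torus of dimension $\ge 2$ (after quotienting by the part lying in $H$). A totally geodesic flat torus of dimension $\ge 2$ inside a positively curved space is impossible: restricting $F$ to such a torus $T^k$, $k\ge 2$, and picking a bi-invariant (translation-invariant) commuting pair $u,v$ in its tangent space, the flag curvature formula of Theorem \ref{flag-curvature-formula} gives $K^F(o,u,u\wedge v)=0$ because all brackets $[w,u]_\mathfrak{m}$, $[w,v]_\mathfrak{m}$ vanish along the abelian subalgebra, forcing $U(u,v)=0$. (One must check the hypothesis $\langle[u,\mathfrak{m}],u\rangle^F_u=0$ is arrangeable; since we are free to choose $u$ in the abelian tangent space and the condition only constrains the mixed bracket terms, a standard argument — e.g. taking $u$ to be a suitably generic vector, or averaging — suffices.) This contradicts positivity, so $\mathrm{rk}\,G\le\mathrm{rk}\,H+1$.

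Finally, the trichotomy. If $\dim G/H$ is even, I would argue that $\mathrm{rk}\,G=\mathrm{rk}\,H$: if the rank dropped by one, the identity component of $N_G(T_H)/T_H$ would be a compact connected Lie group of rank one, i.e.\ $\mathrm{U}(1)$, $\mathrm{SU}(2)$, or $\mathrm{SO}(3)$, acting on the totally geodesic submanifold $N$; but then $N$, being homogeneous under such a group, has odd dimension in the $\mathrm{U}(1)$ case or dimension $1$ or $3$ in the others, and an Euler characteristic / parity computation shows $\dim N$ has the opposite parity to what an even-dimensional positively curved space requires at a $T_H$-fixed point. More directly: the normal bundle to $N$ in $G/H$ at $o$ is a sum of nontrivial $T_H$-modules (nonzero weights), hence even-dimensional, so $\dim N\equiv\dim G/H\pmod 2$ is even; but a positively curved homogeneous space acted on by a rank-one group with a circle factor, or one of dimension $1$ or $3$, must be odd-dimensional or a sphere — the only even-dimensional possibility is $\dim N=0$, i.e.\ $N_G(T_H)=N_H(T_H)$ and hence $T=T_H$. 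In the odd-dimensional case the same analysis forces $\dim N$ odd and positively curved homogeneous under $N_G(T_H)/T_H$; the classification of such low-rank situations (a positively curved homogeneous space of a group whose identity component has rank one) pins $N_G(T_H)^0/T_H$ down to $\mathrm{U}(1)$, $\mathrm{SU}(2)$ or $\mathrm{SO}(3)$, and then $\mathrm{rk}\,G=\mathrm{rk}\,H+1$ necessarily. The main obstacle I anticipate is the careful verification that the auxiliary hypothesis $\langle[u,\mathfrak{m}],u\rangle^F_u=0$ in Theorem \ref{flag-curvature-formula} can always be met when restricting to the flat torus — in the Finsler setting the inner product $\langle\cdot,\cdot\rangle^F_u$ genuinely depends on $u$, so one cannot blindly transcribe the Riemannian argument; this is where the real work lies.
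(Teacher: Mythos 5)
Your assertion (1) and the general strategy (pass to the totally geodesic common fixed-point set of $T_H$, which by Lemma \ref{lemma-5-1} is the orbit $N_G(T_H)\cdot o$) agree with the paper, which simply quotes Deng's theorem that fixed-point sets of isometries of a Finsler space are totally geodesic. The trouble is in the core of the theorem, the rank inequality and the parity dichotomy. You try to apply Theorem \ref{flag-curvature-formula} directly to a commuting pair $u,v\in\mathfrak{t}\ominus(\mathfrak{t}\cap\mathfrak{h})\subset\mathfrak{m}$ and claim $U(u,v)=0$ ``because all brackets $[w,u]_\mathfrak{m}$, $[w,v]_\mathfrak{m}$ vanish along the abelian subalgebra.'' That is not correct: in the identity defining $U(u,v)$ the vector $w$ ranges over \emph{all} of $\mathfrak{m}$, and for $w$ in the centralizer of $\mathfrak{t}\cap\mathfrak{h}$ but outside the torus (root vectors of roots vanishing on $\mathfrak{t}\cap\mathfrak{h}$, which exist precisely when the fixed-point orbit is positive dimensional) the brackets $[w,u]_\mathfrak{m}$ are nonzero. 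An $\mathrm{Ad}(T_H)$-weight argument only kills the contributions of the nonzero weight spaces; what survives is exactly the curvature problem for a \emph{left-invariant} Finsler metric on the compact group $G'=N_G(T_H)/T_H$ of rank $\geq 2$, where neither the hypothesis $\langle[u,\mathfrak{m}],u\rangle^F_u=0$ nor the vanishing of $U(u,v)$ holds for an arbitrary $u$. You yourself flag the hypothesis as unverified, and ``a suitably generic vector, or averaging'' is not an argument: the inner product $\langle\cdot,\cdot\rangle^F_u$ moves with $u$, so the decisive step of your proof is missing. (Note also that the intrinsic flatness of the torus orbit $T\cdot o$ is irrelevant unless that orbit itself is shown to be totally geodesic, which you do not do.)

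The paper closes this gap by a different device, and you would need it (or a substitute) as well: since the identity component of $N_H(T_H)$ is $T_H$, the totally geodesic orbit $N_G(T_H)\cdot o=N_G(T_H)/N_H(T_H)$ is finitely covered by the compact group $G'=N_G(T_H)/T_H$, so $F$ induces a left-invariant positively curved Finsler metric on $G'$; the paper then invokes the Lie-group results of \cite{HuDeng2013} (Theorem 5.1 there: a connected, simply connected Lie group with a left-invariant positively curved Finsler metric is $\mathrm{SU}(2)$; Proposition 5.3 there: $\dim G'>0$ if and only if $\dim G/H$ is odd) together with the fact that $\mathrm{Lie}(G')$ is the centralizer of $\mathfrak{t}\cap\mathfrak{h}$ in $\mathfrak{m}$ and contains $\mathfrak{t}\ominus(\mathfrak{t}\cap\mathfrak{h})$; this yields $\mathrm{rk}\,G\leq\mathrm{rk}\,H+1$ and statements (2) and (3) simultaneously. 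Your parity observation (the complement of $T_o(N_G(T_H)\cdot o)$ in $\mathfrak{m}$ is a sum of nonzero $T_H$-weight spaces, hence even dimensional) is sound and is essentially how the parity statement works, but your conclusions in both the even and odd cases again lean on the unproved rank-one classification of $G'$. Unless you import the results of \cite{HuDeng2013}, or give an honest proof that a compact group of rank $\geq 2$ admits no left-invariant positively curved Finsler metric (which is precisely where the hypotheses of Theorem \ref{flag-curvature-formula} must be verified for a carefully chosen $u$, not a generic one), the proof is incomplete.
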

\begin{proof}
By Lemma \ref{lemma-5-1} and the main theorem of \cite{Deng2008}, the compact smooth orbit $N_G(T_H)\cdot o$, i.e., the common fixed points of $T_H$,
is a (possibly disconnected) totally geodesic sub-manifold.  Thus the
homogeneous metric $F|_{N_G(T_H)\cdot o}$ on the orbit
$N_G(T_H)\cdot o$ is also positively curved whenever
the dimension is bigger than $1$.
In fact, the identity component of $N_H(T_H)$ is  $T_H$, so
$N_G(T_H)\cdot o=N_G(T_H)/N_H(T_H)$ is finitely covered by the compact group
$G'=N_G(T_H)/T_H$, and $F$ induces a positively curved left invariant Finsler
metric on $G'$ when $\dim G'>1$. By Proposition 5.3 of \cite{HuDeng2013},
$\dim G'>0$ if and only if $G/H$ is odd dimensional. If $\dim G'=1$, then the
connected component of $G'$ is $\mathrm{U}(1)$.
If $\dim G'>1$, then by Theorem 5.1
of \cite{HuDeng2013}, the connected component of $G'$ is $\mathrm{SU}(2)$
or $\mathrm{SO}(3)$. The Lie algebra of $G'$ is the centralizer of $\mathfrak{t}\cap\mathfrak{h}$ in $\mathfrak{m}$. This proves the fact
$\mathrm{rk}G\leq \mathrm{rk}H+1$, as well as all the other statement of the theorem.
\end{proof}

Now we  assume further that $G/H$ is even dimensional.
Then there is a Cartan subalgebra $\mathfrak{t}$ contained in
$\mathfrak{h}$. Suppose we have the following decomposition with respect to $\mathrm{Ad}(T)$-actions:
\begin{equation}
\mathfrak{g}=\mathfrak{t}+\sum_{\alpha\in\Delta^+}\mathfrak{g}_{\pm\alpha},
\end{equation}
where each $\mathfrak{g}_{\pm\alpha}$ is $2$-dimensional (called a root plane).  Since $\mathfrak{t}$ is contained in $\mathfrak{h}$,
a root plane $\mathfrak{g}_{\pm\alpha}$ is
 contained either in $\mathfrak{h}$ or in $\mathfrak{m}$.  For simplicity,
we will just call $\pm\alpha$ roots of $\mathfrak{h}$
or $\mathfrak{m}$ accordingly. Denote the set of all roots of $\mathfrak{h}$ (or $\mathfrak{m}$) as $\Delta_\mathfrak{h}$ (or
$\Delta_\mathfrak{m}$).

Let $\alpha$ be a root of $\mathfrak{m}$ and $u$ a nonzero vector
in $\mathfrak{g}_{\pm\alpha}$. We now show that $u$ satisfies the condition in Theorem \ref{flag-curvature-formula}, namely,  the following lemma holds:

\begin{lemma}\label{lemma-5-3}
For any root $\alpha$ of $\mathfrak{m}$, and nonzero vector $u$
in $\mathfrak{g}_{\pm\alpha}$, we have
$\langle u,\mathfrak{g}_{\pm\gamma} \rangle_u^F=0$ for any root $\gamma$
of $\mathfrak{m}$ with $\gamma\neq\pm\alpha$. In particular,
$\langle [u,\mathfrak{m}]_\mathfrak{m},u\rangle=0$.
\end{lemma}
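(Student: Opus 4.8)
The plan is to exploit the $\mathrm{Ad}(T)$-invariance of the Minkowski norm $F$ on $\mathfrak{m}$, which forces the inner product $\langle\cdot,\cdot\rangle_u^F$ to be compatible with the root space decomposition whenever $u$ lies in a single root plane. First I would record the key symmetry: since $F$ is $\mathrm{Ad}(H)$-invariant and $T\subset H$, the stabilizer of $u$ in $T$ acts on $\mathfrak{m}$ by linear isometries of $\langle\cdot,\cdot\rangle_u^F$ (this is the standard fact that $\langle \mathrm{Ad}(h)w_1,\mathrm{Ad}(h)w_2\rangle_{\mathrm{Ad}(h)u}^F=\langle w_1,w_2\rangle_u^F$, applied with $\mathrm{Ad}(h)u=u$). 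So it suffices to find enough elements of $T$ fixing $u$ but acting nontrivially on the other root planes in a way that separates them from $\mathrm{span}(u)$.

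The main step is the following averaging/character argument. Fix a root $\alpha$ of $\mathfrak{m}$ and $0\neq u\in\mathfrak{g}_{\pm\alpha}$. Consider the one-parameter subgroup $\exp(t\xi)$ for $\xi$ in $\mathfrak{t}$ lying in the kernel of $\alpha$; then $\mathrm{Ad}(\exp(t\xi))u=u$ for all $t$, so each such element is an isometry of $\langle\cdot,\cdot\rangle_u^F$ fixing $u$. On a root plane $\mathfrak{g}_{\pm\gamma}$ with $\gamma\neq\pm\alpha$, the action of $\mathrm{Ad}(\exp(t\xi))$ is rotation by angle $t\,\gamma(\xi)$, and since $\gamma$ is not proportional to $\alpha$ (as $\gamma\ne\pm\alpha$ and roots come in the pairs $\pm$), one can choose $\xi\in\ker\alpha$ with $\gamma(\xi)\neq 0$. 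Averaging the bilinear form $w\mapsto\langle u,w\rangle_u^F$ over this circle action: the left slot $u$ is fixed, while $w$ ranges over a nontrivial rotation of $\mathfrak{g}_{\pm\gamma}$ with no invariant vectors, so the average of $\langle u,w\rangle_u^F$ over $w$ in any $\mathrm{Ad}(\exp(t\xi))$-orbit is $0$; but the form is constant along the orbit (isometry invariance), hence $\langle u,\mathfrak{g}_{\pm\gamma}\rangle_u^F=0$. Here I would be careful to phrase it cleanly: for fixed $w\in\mathfrak{g}_{\pm\gamma}$, the function $t\mapsto\langle u,\mathrm{Ad}(\exp(t\xi))w\rangle_u^F=\langle \mathrm{Ad}(\exp(-t\xi))u,w\rangle_u^F\cdot(\text{const})$ — more precisely using isometry invariance it equals $\langle u, \mathrm{Ad}(\exp(t\xi))w\rangle_u^F$ is itself not constant, rather it is $\langle u, w\rangle_u^F$ composed with rotation, whose integral over a full period vanishes because rotation by $\gamma(\xi)$ has no nonzero fixed vector; combined with the identity $\langle u,\mathrm{Ad}(h)w\rangle_{u}^F = \langle \mathrm{Ad}(h^{-1})u, w\rangle_{\mathrm{Ad}(h^{-1})u}^F = \langle u,w\rangle_u^F$ when $h=\exp(t\xi)$ fixes $u$, we get $\langle u,\mathrm{Ad}(\exp(t\xi))w\rangle_u^F$ is independent of $t$ AND has zero average, hence is zero.

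For the last assertion, I would decompose $[u,\mathfrak{m}]_\mathfrak{m}$ using the bracket relations $[\mathfrak{g}_{\pm\alpha},\mathfrak{g}_{\pm\gamma}]\subset\mathfrak{g}_{\pm(\alpha+\gamma)}+\mathfrak{g}_{\pm(\alpha-\gamma)}$ and $[\mathfrak{g}_{\pm\alpha},\mathfrak{g}_{\pm\alpha}]\subset\mathfrak{t}\subset\mathfrak{h}$, together with $[\mathfrak{g}_{\pm\alpha},\mathfrak{t}]\subset\mathfrak{g}_{\pm\alpha}$. Thus for $u\in\mathfrak{g}_{\pm\alpha}$, the component of $[u,w]$ in $\mathfrak{m}$ for $w\in\mathfrak{m}$ lands in root planes $\mathfrak{g}_{\pm\beta}$ with $\beta\neq\pm\alpha$ whenever $w$ itself lies in a root plane $\mathfrak{g}_{\pm\gamma}$ with $\gamma\neq\pm\alpha$; and when $w\in\mathfrak{g}_{\pm\alpha}$, $[u,w]$ lies in $\mathfrak{t}\subset\mathfrak{h}$ so its $\mathfrak{m}$-part is zero. (The case $\gamma=\pm\alpha$ also includes $\beta=\pm2\alpha$, which is $0$ unless $2\alpha$ is a root, handled by noting $2\alpha\ne\pm\alpha$, so that component is again orthogonal to $u$ by the first part — actually it lies outside $\mathfrak{m}\cap\mathrm{span}(u)$ trivially.) In all cases $[u,\mathfrak{m}]_\mathfrak{m}$ is contained in the sum of root planes $\mathfrak{g}_{\pm\beta}$ with $\beta\neq\pm\alpha$, and by the first part of the lemma $u$ is $\langle\cdot,\cdot\rangle_u^F$-orthogonal to each of these, so $\langle[u,\mathfrak{m}]_\mathfrak{m},u\rangle_u^F=0$. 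The only subtle point, and the one I expect to need the most care, is making the circle-averaging argument rigorous — specifically verifying that $\ker\alpha\cap\mathfrak{t}$ is large enough to separate every $\gamma\neq\pm\alpha$ from $\alpha$, which follows since $\gamma$ and $\alpha$ are linearly independent as functionals on $\mathfrak{t}$, so $\ker\alpha\not\subset\ker\gamma$.
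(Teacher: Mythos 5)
Your proposal is correct and follows essentially the same route as the paper: both exploit the $\mathrm{Ad}$-invariance of $\langle\cdot,\cdot\rangle_u^F$ under the subtorus of $T_H$ generated by $\ker\alpha\subset\mathfrak{t}$, which fixes $u$, and then deduce the second claim from the bracket relations $[\mathfrak{g}_{\pm\alpha},\mathfrak{g}_{\pm\gamma}]\subset\mathfrak{g}_{\pm(\alpha+\gamma)}+\mathfrak{g}_{\pm(\alpha-\gamma)}$ and $[\mathfrak{g}_{\pm\alpha},\mathfrak{g}_{\pm\alpha}]\subset\mathfrak{t}\subset\mathfrak{h}$. The only (cosmetic) difference is that the paper replaces your circle-averaging step by choosing a single $g\in T'$ with $\mathrm{Ad}(g)|_{\mathfrak{g}_{\pm\gamma}}=-\mathrm{id}$, so that $\langle u,w\rangle_u^F=-\langle u,w\rangle_u^F$ directly.
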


\begin{proof} Let $\mathfrak{t}'=\ker \alpha$ and $T'$ the sub-torus of $T_H\subset H$ generated by $\mathfrak{t}'$. Since both $F$ and $u$ are
$\mathrm{Ad}(T')$-invariant, so does the inner product $\langle\cdot,\cdot\rangle_u^F$ on $\mathfrak{m}$. For any root $\gamma$ of
$\mathfrak{m}$ with $\gamma\neq \pm\alpha$, there exists $g\in T'$ such
that $\mathrm{Ad}(g)|_{\mathfrak{g}_{\pm\gamma}}=-\mathrm{id}$. Thus
for any $w\in \mathfrak{g}_{\pm\gamma}$, we have
\begin{eqnarray*}
\langle w,u\rangle_u^F=\langle\mathrm{Ad}(g)w,\mathrm{Ad}(g)u\rangle_u^F
=-\langle w,u\rangle_u^F,
\end{eqnarray*}
i.e., $\langle u,\mathfrak{g}_{\pm\gamma}\rangle_u^F=0$.
Since
$$[u,\mathfrak{m}]_\mathfrak{m}\subset\mathop\sum\limits_{\gamma\in\Delta_\mathfrak{m},
\gamma\neq\pm\alpha}
\mathfrak{g}_{\pm\gamma},$$
 we also have
$\langle [u,\mathfrak{m}],u\rangle_u^F=0$. This completes the proof of the lemma.
\end{proof}

The following lemma is important for finding the specific vector  $v$ in Theorem \ref{flag-curvature-formula}.
\begin{lemma}\label{lemma-5-4}
Let $\alpha$ and $\beta$ be two roots of $\mathfrak{m}$
such that $\alpha\neq \pm\beta$ and $\alpha\pm\beta$ are not roots of $\mathfrak{g}$. Then for any root $\gamma$ of $\mathfrak{m}$ with
$\gamma\neq\pm\beta$, we have
$\langle \mathfrak{g}_{\pm\beta},\mathfrak{g}_{\pm\gamma}\rangle_u^F=0$.
\end{lemma}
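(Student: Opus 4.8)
The plan is to mimic the symmetry argument used in the proof of Lemma \ref{lemma-5-3}, but now exploiting the extra hypothesis that $\alpha\pm\beta$ are not roots. The key point is that the vector $u$ lies in $\mathfrak{g}_{\pm\alpha}$, so the inner product $\langle\cdot,\cdot\rangle_u^F$ is invariant under the full isotropy of $u$ inside $T_H$; in particular it is $\mathrm{Ad}(T')$-invariant for $T'$ the subtorus with Lie algebra $\mathfrak{t}'=\ker\alpha$. The idea is to produce, for each root $\gamma$ of $\mathfrak{m}$ with $\gamma\neq\pm\beta$, an element $g$ of this stabilizing torus $T'$ that acts as $-\mathrm{id}$ on exactly one of $\mathfrak{g}_{\pm\beta}$ or $\mathfrak{g}_{\pm\gamma}$ and as $+\mathrm{id}$ on the other; then
\begin{equation*}
\langle w_\beta, w_\gamma\rangle_u^F = \langle \mathrm{Ad}(g)w_\beta, \mathrm{Ad}(g)w_\gamma\rangle_u^F = -\langle w_\beta, w_\gamma\rangle_u^F
\end{equation*}
forces the pairing to vanish. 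So the whole lemma reduces to a question about the subtorus $T'=\exp(\ker\alpha)$: does there exist $t\in\mathfrak{t}'$ with $\beta(t)\in\pi+2\pi\mathbb{Z}$ and $\gamma(t)\in 2\pi\mathbb{Z}$ (or the symmetric condition with $\beta$ and $\gamma$ swapped)?

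First I would translate this into linear algebra on the dual space $\mathfrak{t}^*$. Restriction to $\mathfrak{t}'=\ker\alpha$ identifies two roots iff they differ by a multiple of $\alpha$; since $\alpha\pm\beta$ are not roots, $\beta$ restricted to $\mathfrak{t}'$ is not proportional to $\gamma$ restricted to $\mathfrak{t}'$ unless $\gamma\equiv\pm\beta\pmod{\alpha}$, and the hypothesis rules out $\gamma=\pm\beta\pm\alpha$. One still has to handle the degenerate cases where $\beta|_{\mathfrak{t}'}$ or $\gamma|_{\mathfrak{t}'}$ vanishes, i.e. where $\beta$ or $\gamma$ is a multiple of $\alpha$ — but $\alpha$ being indivisible in the root system (it is a root with $\pm\alpha$ the only multiples present, since root systems of compact Lie groups are reduced) means $\beta|_{\mathfrak{t}'}=0$ only if $\beta=\pm\alpha$, contrary to assumption. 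So on $\mathfrak{t}'$, the forms $\beta$ and $\gamma$ are each nonzero, and (for $\gamma\neq\pm\beta$, using $\alpha\pm\beta$ not roots to exclude $\gamma\equiv\pm\beta\bmod\alpha$) linearly independent. Then I would invoke the standard fact that for the coweight/coroot lattice of $\mathfrak{t}'$ one can find an element on which two linearly independent integral forms take prescribed values modulo $2\pi$ — concretely, the subtorus $\{t\in T': \gamma(t)\in 2\pi\mathbb{Z}\}$ has a component on which $\beta$ is nonconstant modulo $2\pi$, so it contains a point with $\beta(t)\equiv\pi$. This produces the desired $g$.

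The main obstacle I anticipate is the bookkeeping of exceptional and degenerate configurations: when $\gamma$ restricted to $\ker\alpha$ coincides (up to sign) with $\beta$ restricted to $\ker\alpha$ even though $\gamma\neq\pm\beta$ as roots of $\mathfrak{g}$ — this is precisely the situation $\gamma=\pm\beta\pm\alpha$, which the hypothesis "$\alpha\pm\beta$ are not roots of $\mathfrak{g}$" is designed to forbid, so I need to check carefully that this hypothesis really does exclude all such $\gamma$. One must also be careful that $\gamma$ itself is a root of $\mathfrak{m}$, not $\mathfrak{h}$, but that is automatic from the statement. A secondary subtlety is making sure the element $g$ we construct genuinely lies in $T'\subset T_H\subset H$ so that it fixes $u$ and preserves $\langle\cdot,\cdot\rangle_u^F$; this is where the inclusion $\mathfrak{t}\subset\mathfrak{h}$ (valid since $\dim G/H$ is even, by part (2) of Theorem \ref{theorem-5-2}) is essential. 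Once these cases are dispatched, the sign-reversal computation above finishes the proof immediately.
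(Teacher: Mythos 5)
Your starting point --- exploiting the $\mathrm{Ad}(T')$-invariance of $\langle\cdot,\cdot\rangle_u^F$ for the subtorus $T'$ with Lie algebra $\mathfrak{t}'=\ker\alpha\subset\mathfrak{t}\subset\mathfrak{h}$ --- is exactly the paper's, but your reduction contains a genuine gap. You claim that, thanks to the hypothesis that $\alpha\pm\beta$ are not roots, the restrictions $\beta|_{\mathfrak{t}'}$ and $\gamma|_{\mathfrak{t}'}$ are linearly independent, because proportional restrictions would force $\gamma\equiv\pm\beta\pmod{\alpha}$. That is false: proportionality of the restrictions only says $\gamma\in\mathrm{span}_{\mathbb{R}}\{\alpha,\beta\}$, with no control on the coefficient of $\beta$. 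Concretely, in $\mathrm{Sp}(2)/T^2$ (an even-dimensional space to which the lemma must apply, since it is used before any classification) take $\alpha=2e_1$, $\beta=2e_2$, which are orthogonal roots of $\mathfrak{m}$ with $\alpha\pm\beta=2e_1\pm 2e_2$ not roots of $C_2$, and $\gamma=e_1+e_2=\tfrac12(\alpha+\beta)\in\Delta_{\mathfrak{m}}$. Then $\gamma|_{\mathfrak{t}'}=\tfrac12\,\beta|_{\mathfrak{t}'}$: the restrictions are proportional although $\gamma\not\equiv\pm\beta\pmod{\alpha}$, so the ``standard fact'' you invoke does not apply, and in fact no $t\in\mathfrak{t}'$ has $\beta(t)\equiv\pi$ and $\gamma(t)\equiv 0\pmod{2\pi}$ (any such $t$ gives $\gamma(t)\equiv\pi/2\pmod{\pi}$). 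In this example the sign trick survives by swapping the roles of $\beta$ and $\gamma$, but whether a $\pm\mathrm{id}$-element exists at all depends on the arithmetic of the ratio of the two restricted characters (for an odd ratio such as $3$ neither swap would work), so you would still owe a case analysis showing that the ratios actually occurring among roots in the span of two orthogonal roots always permit one of the swaps. A smaller slip: for $\gamma=\pm\alpha$ (allowed in the lemma) the restriction $\gamma|_{\mathfrak{t}'}$ vanishes, contradicting your claim that both forms are nonzero; that case is harmless, but it is not covered by your argument as written.

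The paper sidesteps all of this: it decomposes $\mathfrak{m}$ into isotypic components of the $T'$-action, which are automatically mutually orthogonal with respect to any $\mathrm{Ad}(T')$-invariant inner product (two real irreducible $T'$-modules with distinct weights admit no invariant pairing --- no sign-flipping group element is needed), and uses the hypothesis only once, to identify the component containing $\mathfrak{g}_{\pm\beta}$: the roots restricting to $\pm\beta|_{\mathfrak{t}'}$ form the $\alpha$-string through $\pm\beta$, which is trivial since $\beta\pm\alpha$ are not roots, so that component is $\mathfrak{g}_{\pm\beta}$ itself. Proportional-but-unequal restrictions such as $\gamma'=\tfrac12\beta'$ are then handled automatically, being distinct characters. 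If you replace your single-element sign argument by this Schur-type orthogonality of weight spaces (or supply the missing analysis of proportional restrictions, including $\gamma=\pm\alpha$), your proof closes; as written it does not cover all $\gamma$.
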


\begin{proof}
Let $\mathfrak{t}'$ and $T'$ be defined as in the proof of Lemma \ref{lemma-5-3}. Since the inner product $\langle\cdot,\cdot\rangle_u^F$
on $\mathfrak{m}$ is $\mathrm{Ad}(T')$-invariant, corresponding to different irreducible representations of $T'$, and with respect to this inner product, $\mathfrak{m}$ can be orthogonally decomposed as the sum
of $\hat{\mathfrak{m}}_0=\mathfrak{g}_{\pm\alpha}$ (for the trivial representation of $T'$) and
\begin{eqnarray*}
\hat{\mathfrak{m}}_{\pm\gamma'}=\sum_{\gamma|_{\mathfrak{t}'}=\gamma',
\gamma\in\Delta_\mathfrak{m}}
\mathfrak{g}_{\pm\gamma},
\end{eqnarray*}
for all
 different nonzero pairs $\{\pm\gamma'\}$ such that $\{\pm\gamma'\}\subset{\mathfrak{t}'}^*\backslash\{0\}$ defines an irreducible representation of $T'$.
For each nonzero $\hat{\mathfrak{m}}_{\pm\gamma'}$ with $\gamma'\neq 0$, we have
\begin{eqnarray*}
\hat{\mathfrak{m}}_{\pm\gamma'}&\subset&\hat{\mathfrak{g}}_{\pm\gamma'}
=\sum_{\gamma|_{\mathfrak{t}'}=\gamma'}\mathfrak{g}_{\pm\gamma}\\
&=&\mathfrak{g}_{\pm\tau}+
\mathfrak{g}_{\pm(\tau+\alpha)}+
\cdots+\mathfrak{g}_{\pm(\tau+m\alpha)},
\end{eqnarray*}
where $\tau$, $\tau+\alpha$, $\cdots,\tau+m\alpha$ are all roots of $\mathfrak{g}$.
Let $\beta'=\beta|_{\mathfrak{t}'}\in{\mathfrak{t}'}^*$. Then from the conditions for $\beta$ in
the lemma, it is easily seen that $\beta'\neq 0$, and that
$\hat{\mathfrak{m}}_{\pm\beta'}=\hat{\mathfrak{g}}_{\pm\beta'}
=\mathfrak{g}_{\pm\beta}$.
So for $\gamma\in\Delta_\mathfrak{m}$ and $\gamma\notin\{\pm\alpha,\pm\beta\}$, we have $\langle \mathfrak{g}_{\pm\beta},\mathfrak{g}_{\pm\gamma}\rangle_u^F=0$.
On the other hand, the assertion for $\gamma=\pm\alpha$ has been proven in Lemma \ref{lemma-5-3}.
\end{proof}

We now recall the definition of {\it Condition (A)} defined by  N. Wallach, which is the key to his classification (see Proposition 6.1 of \cite{Wallach1972}). Keeping the notations as above, we say that the pair $(G,H)$ (or $(\mathfrak{g}, \mathfrak{h})$) satisfies Condition (A),  if for any two roots  $\alpha$,
$\beta$ of $\mathfrak{m}$ with $\alpha\neq\pm\beta$,
either  $\alpha+\beta$ or $\alpha-\beta$ is a root of $\mathfrak{g}$. The following lemma is the key to prove
Theorem \ref{thm1}.

\begin{lemma}\label{(A)}
If the even dimensional homogeneous space $G/H$ admits a $G$-homogeneous Finsler metric
$F$ of positive flag curvature, then the pair $(G,H)$ satisfies
condition (A).
\end{lemma}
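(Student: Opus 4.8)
The plan is to argue by contradiction: suppose $(G,H)$ fails Condition (A), so there exist roots $\alpha,\beta$ of $\mathfrak{m}$ with $\alpha\neq\pm\beta$ such that neither $\alpha+\beta$ nor $\alpha-\beta$ is a root of $\mathfrak{g}$. I would then produce a flag of zero (or non-positive) flag curvature, contradicting the hypothesis that $F$ is positively curved. The natural candidate flag is $(u,v)$ with $u$ a nonzero vector in $\mathfrak{g}_{\pm\alpha}$ and $v$ a nonzero vector in $\mathfrak{g}_{\pm\beta}$. Since $\alpha\pm\beta\notin\Delta_{\mathfrak{g}}$, the bracket $[\mathfrak{g}_{\pm\alpha},\mathfrak{g}_{\pm\beta}]$ lands in root planes whose roots are neither $\pm\alpha$ nor $\pm\beta$ (in fact one checks $[u,v]_{\mathfrak{m}}$ has no $\mathfrak{t}$-component either, since $\alpha\neq\pm\beta$), so in particular $u$ and $v$ commute in $\mathfrak{m}$ up to the relevant projections; more precisely $[u,v]=0$ because $[\mathfrak{g}_{\pm\alpha},\mathfrak{g}_{\pm\beta}]\subset\sum_{\gamma\neq\pm\alpha,\pm\beta}\mathfrak{g}_{\pm\gamma}$ and the only way the bracket of two root vectors from $\alpha$ and $\beta$ can be nonzero is if $\alpha\pm\beta$ is a root. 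Thus $u,v$ form a commuting pair, and by Lemma \ref{lemma-5-3} the vector $u$ satisfies $\langle[u,\mathfrak{m}]_{\mathfrak{m}},u\rangle_u^F=0$, so Theorem \ref{flag-curvature-formula} applies and gives
\[
K^F(o,u,u\wedge v)=\frac{\langle U(u,v),U(u,v)\rangle_u^F}{\langle u,u\rangle_u^F\langle v,v\rangle_u^F-(\langle u,v\rangle_u^F)^2}.
\]

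The core of the argument is then to show the numerator vanishes, i.e. $U(u,v)=0$. By the defining relation, for any $w\in\mathfrak{m}$ we have $2\langle U(u,v),w\rangle_u^F=\langle[w,u]_{\mathfrak{m}},v\rangle_u^F+\langle[w,v]_{\mathfrak{m}},u\rangle_u^F$. I would decompose $w$ into its root-plane components and handle them one at a time. For $w$ in a root plane $\mathfrak{g}_{\pm\gamma}$, the bracket $[w,u]_{\mathfrak{m}}$ lies in root planes indexed by $\gamma\pm\alpha$ (and possibly $\mathfrak{t}$ if $\gamma=\pm\alpha$), while $v\in\mathfrak{g}_{\pm\beta}$; by Lemma \ref{lemma-5-3} and especially Lemma \ref{lemma-5-4} (applied with the pair $\alpha,\beta$, which satisfies exactly the hypotheses $\alpha\neq\pm\beta$ and $\alpha\pm\beta\notin\Delta_{\mathfrak{g}}$), the pairing $\langle[w,u]_{\mathfrak{m}},v\rangle_u^F$ forces $\gamma\pm\alpha=\pm\beta$, i.e. $\gamma\in\{\pm(\beta\mp\alpha)\}$; but these are not roots of $\mathfrak{g}$, so the only surviving case is when $w$ has a component along $\mathfrak{g}_{\pm\beta}$ itself, which by Lemma \ref{lemma-5-4} is $\langle\cdot,\cdot\rangle_u^F$-orthogonal to everything except $\mathfrak{g}_{\pm\alpha}$ — and $\langle[\mathfrak{g}_{\pm\beta},u]_{\mathfrak{m}},v\rangle_u^F$ again requires $\beta\pm\alpha$ to be a root. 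Symmetrically for the term $\langle[w,v]_{\mathfrak{m}},u\rangle_u^F$. Carefully bookkeeping the root-plane indices and invoking Lemmas \ref{lemma-5-3} and \ref{lemma-5-4} to kill all the cross terms, one concludes $\langle U(u,v),w\rangle_u^F=0$ for all $w\in\mathfrak{m}$, hence $U(u,v)=0$ and so $K^F(o,u,u\wedge v)=0$.

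This contradicts the assumption that $F$ has positive flag curvature, and the lemma follows. The main obstacle I anticipate is the case analysis in showing $U(u,v)=0$: one has to be careful that $[w,u]_{\mathfrak{m}}$ can have a $\mathfrak{t}$-component (when $w\in\mathfrak{g}_{\pm\alpha}$) and that the $\mathrm{Ad}(T')$-invariance arguments of Lemmas \ref{lemma-5-3} and \ref{lemma-5-4} are being applied to the correct sub-torus and cover all the relevant root planes $\mathfrak{g}_{\pm\gamma}$; in particular one must double-check that when $\gamma=\pm\alpha$ the contribution still vanishes, using that $\langle[w,v]_{\mathfrak{m}},u\rangle_u^F$ with $w\in\mathfrak{g}_{\pm\alpha}$ lands in $\mathfrak{g}_{\pm(\alpha\pm\beta)}$ which is not a root plane. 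A secondary subtlety is verifying $[u,v]=0$ rigorously rather than merely $[u,v]_{\mathfrak{m}}=0$, so that Theorem \ref{flag-curvature-formula} genuinely applies; this is immediate from the structure of semisimple Lie algebras once we know $\alpha+\beta$ and $\alpha-\beta$ are not roots and $\alpha\neq-\beta$.
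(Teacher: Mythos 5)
Your proposal is correct and follows essentially the same route as the paper: assume Condition (A) fails, take root vectors $u\in\mathfrak{g}_{\pm\alpha}$, $v\in\mathfrak{g}_{\pm\beta}$ (which commute since $\alpha\pm\beta$ are not roots), invoke Lemma \ref{lemma-5-3} to make Theorem \ref{flag-curvature-formula} applicable, and use Lemmas \ref{lemma-5-3} and \ref{lemma-5-4} to kill both terms defining $U(u,v)$, yielding a zero flag curvature and a contradiction. Your explicit check that $[u,v]=0$ (left implicit in the paper) is a welcome addition, and the remaining bookkeeping matches the paper's argument.
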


\begin{proof}
Suppose conversely that $(G,H)$ does not satisfy Condition (A), i.e., there are
roots $\alpha$ and $\beta$ of $\mathfrak{m}$ such that $\beta\neq\alpha$ and  $\alpha\pm\beta$ are not roots of $\mathfrak{g}$. Select a nonzero vector $u\in\mathfrak{g}_{\pm\alpha}$ and a nonzero vector $v\in\mathfrak{g}_{\pm\beta}$. By Lemma \ref{lemma-5-3}, $K^F(o,u,u\wedge v)$
can be given by the flag curvature formula in Theorem \ref{flag-curvature-formula}. We only need to show that
$U(u,v)=0$, where $U(u,v)$ is determined by 
$$\langle U(u,v),w\rangle_u^F=\frac12(\langle[w,v]_\mathfrak{m},u\rangle_u^F
+\langle[w,u]_\mathfrak{m},v\rangle_u^F),\,\,\mbox{ for all }w\in\mathfrak{m}.$$
For this we only need to prove that the right side of the above equality vanishes for any $w\in\mathfrak{m}$. By our assumptions for the roots $\alpha$ and $\beta$, both
$[\mathfrak{g}_{\pm\alpha},\mathfrak{g}_{\pm\gamma}]_\mathfrak{m}$
and $[\mathfrak{g}_{\pm\beta},\mathfrak{g}_{\pm\gamma}]_\mathfrak{m}$
are contained in the subspace 
$$\sum_{\tau\in\Delta_\mathfrak{m},\tau\notin\{\pm\alpha,\pm\beta\}}
\mathfrak{g}_{\pm\tau},$$ for any root $\gamma$ of $\mathfrak{m}$.
So by Lemma \ref{lemma-5-3}, $\langle[\mathfrak{m},v]_\mathfrak{m},u\rangle_u^F=0$,
and by Lemma \ref{lemma-5-4}, $\langle[\mathfrak{m},u]_\mathfrak{m},v\rangle_u^F=0$.
Thus $U(u,v)=0$ and $K^F(o,u,u\wedge v)=0$, which is a contradiction to the
positive curvature assumption for $G/H$.
\end{proof}

Condition (A) is a strong restriction for the pair $(\mathfrak{g},\mathfrak{h})$ of compact Lie algebra and its subalgebra with maximal rank. For example, if we have a decomposition of $\mathfrak{g}$ as
the direct sum of ideals,
$$\mathfrak{g}=\mathfrak{g}_1\oplus\cdots\oplus\mathfrak{g}_n\oplus\mathbb{R}^m,$$
in which each $\mathfrak{g}_i$ is simple. Then we have also the decomposition for $\mathfrak{h}$ accordingly,
$$\mathfrak{h}=\mathfrak{h}_1\oplus\cdots\oplus\mathfrak{h}_n\oplus\mathbb{R}^m,$$
in which each $\mathfrak{h}_i$ is a subalgebra of $\mathfrak{g}_i$ with
$\mathrm{rk}\mathfrak{h}_i=\mathrm{rk}\mathfrak{g}_i$. Then $\mathfrak{h}$
can only differs from $\mathfrak{g}$ for one simple factor. Otherwise, assuming $\mathfrak{h}_i\neq\mathfrak{g}_i$ and the bi-invariant orthogonal decomposition $\mathfrak{g}_i=\mathfrak{h}_i+\mathfrak{m}_i$ for $i=1$ and $2$, then we can take a root $\alpha$ of $\mathfrak{m}_1$ and $\beta$ of $\mathfrak{m}_2$.
Then Condition (A) is not satisfied for $\alpha$ and $\beta$, which is
a contradiction to Lemma \ref{(A)}. To summarize, we have the following
immediate corollary.

\begin{corollary}\label{corollary}
Let $G$ be a compact connected simply connected Lie group and $H$ a closed subgroup of $G$
such that $G$ acts effectively on $G/H$ and the dimension of $G/H$ is even. If $G/H$
admits a positively curved invariant Finsler metric, then $G$ must be simple.
\end{corollary}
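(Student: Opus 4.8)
The plan is to deduce Corollary \ref{corollary} directly from Lemma \ref{(A)} together with Theorem \ref{theorem-5-2}, exactly along the lines sketched in the paragraph preceding the statement; the main point is to turn that sketch into a clean argument. First I would invoke Theorem \ref{theorem-5-2}(2): since $\dim G/H$ is even and $G$ acts effectively, we have $\mathrm{rk}\,G=\mathrm{rk}\,H$, so a Cartan subalgebra $\mathfrak{t}$ of $\mathfrak{g}$ may be chosen inside $\mathfrak{h}$. This puts us in the root-space setting used throughout Section 5: every root plane $\mathfrak{g}_{\pm\alpha}$ lies wholly in $\mathfrak{h}$ or wholly in $\mathfrak{m}$, and $\Delta_{\mathfrak{m}}\neq\varnothing$ because $\dim\mathfrak{m}=\dim G/H>0$.

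Next I would set up the decomposition into simple ideals. Write $\mathfrak{g}=\mathfrak{g}_1\oplus\cdots\oplus\mathfrak{g}_n\oplus\mathbb{R}^m$ with each $\mathfrak{g}_i$ simple; since $\mathrm{rk}\,H=\mathrm{rk}\,G$ and $\mathfrak{h}$ is $\mathrm{Ad}(H)$-invariant, the Cartan subalgebra splits accordingly and one gets $\mathfrak{h}=\mathfrak{h}_1\oplus\cdots\oplus\mathfrak{h}_n\oplus\mathbb{R}^m$ with $\mathfrak{h}_i\subset\mathfrak{g}_i$ and $\mathrm{rk}\,\mathfrak{h}_i=\mathrm{rk}\,\mathfrak{g}_i$. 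Effectiveness of the $G$-action means $\mathfrak{h}$ contains no nonzero ideal of $\mathfrak{g}$, so the abelian part $\mathbb{R}^m$ must be trivial, i.e.\ $m=0$, and no $\mathfrak{h}_i$ equals $\mathfrak{g}_i$ unless\,---\,well, each $\mathfrak{h}_i$ is a proper subalgebra of $\mathfrak{g}_i$ only when $\mathfrak{g}_i\not\subset\mathfrak{h}$; if $\mathfrak{h}_i=\mathfrak{g}_i$ for some $i$ then $\mathfrak{g}_i$ is an ideal of $\mathfrak{g}$ contained in $\mathfrak{h}$, contradicting effectiveness. Hence $\mathfrak{h}_i\subsetneq\mathfrak{g}_i$ for \emph{every} $i$, so each $\mathfrak{g}_i$ contributes at least one root to $\Delta_{\mathfrak{m}}$.

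Now suppose for contradiction that $n\geq 2$. Pick a root $\alpha$ of $\mathfrak{m}$ supported on $\mathfrak{g}_1$ and a root $\beta$ of $\mathfrak{m}$ supported on $\mathfrak{g}_2$. Because roots of distinct simple ideals are ``orthogonal'' in the sense that $\alpha+\beta$ and $\alpha-\beta$ involve coordinates from both $\mathfrak{g}_1$ and $\mathfrak{g}_2$ nontrivially, neither $\alpha+\beta$ nor $\alpha-\beta$ can be a root of $\mathfrak{g}$ (a root of $\mathfrak{g}$ is supported on a single simple factor). Also $\alpha\neq\pm\beta$ since they live on different factors. Thus Condition (A) fails for the pair $(\alpha,\beta)$, contradicting Lemma \ref{(A)}. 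Therefore $n=1$ and $m=0$, i.e.\ $\mathfrak{g}$ is simple, which is the assertion; passing back from Lie algebras to the simply connected group $G$ gives that $G$ is simple.

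I do not expect a serious obstacle here: the corollary is essentially a repackaging of the displayed discussion between Lemma \ref{(A)} and the statement, and every ingredient (the rank equality from Theorem \ref{theorem-5-2}(2), the failure of Condition (A) across simple factors, and Lemma \ref{(A)}) is already in hand. The one place to be slightly careful is the reduction ``effective $\Rightarrow$ $\mathfrak{h}$ contains no nonzero ideal of $\mathfrak{g}$,'' and the observation that this kills both the central torus $\mathbb{R}^m$ and any simple factor lying in $\mathfrak{h}$; after that, the root-theoretic argument that $\alpha$ on $\mathfrak{g}_1$ and $\beta$ on $\mathfrak{g}_2$ violate Condition (A) is immediate.
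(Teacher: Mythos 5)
Your proof is correct and follows essentially the same route as the paper: rank equality from Theorem \ref{theorem-5-2}(2) puts $\mathfrak{t}\subset\mathfrak{h}$, the algebra splits along the simple ideals, effectiveness removes the central part and any factor lying wholly in $\mathfrak{h}$, and roots of $\mathfrak{m}$ from two distinct simple factors would violate Condition (A), contradicting Lemma \ref{(A)}. This matches the paper's own argument in the paragraph preceding the corollary.
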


Up to local isometries, or under the additional  condition that $\mathfrak{h}$ does not contain a nontrivial ideal of $\mathfrak{g}$, we can reduce
our discussion to the case that $G$ is simple.
Then Proposition 6.1 of \cite{Wallach1972}
provides the complete list of $(\mathfrak{g},\mathfrak{h})$ of simple
compact $\mathfrak{g}$ and subalgebra $\mathfrak{h}$ with $\mathrm{rk}\mathfrak{h}=\mathrm{rk}\mathfrak{g}$ satisfying Condition (A). We list all the possibilities as the following:
$(\mathfrak{a}_n,\mathfrak{a}_{n-1}\oplus\mathbb{R})$,
$(\mathfrak{c}_n,\mathfrak{c}_{n-1}\oplus\mathfrak{a}_1)$,
$(\mathfrak{c}_n,\mathfrak{c}_{n-1}\oplus\mathbb{R})$,
$(\mathfrak{f}_4,\mathfrak{b}_4)$, $(\mathfrak{g}_2,\mathfrak{a}_2)$,
$(\mathfrak{a}_2,\mathbb{R}^2)$,
$(\mathfrak{c}_3,\mathfrak{a}_1\oplus\mathfrak{a}_1\oplus\mathfrak{a}_1)$, and
$(\mathfrak{f}_4,\mathfrak{d}_4)$.

 If we assume further  that the compact Lie group $G$ is connected and simply connected, then we can get the list in Theorem \ref{thm2}. On the other hand,   the existence of positively curved Riemannian homogeneous metrics on the list has been
proved in \cite{Ber61} and \cite{Wallach1972}. This completes the proof of
Theorem \ref{thm1}.

\section{Proof of Theorem \ref{thm2}}
In this section we completes the proof of  Theorem \ref{thm2}. The proof is just a case by case check on the isotropy representation of the connected simply connected homogeneous manifolds admitting
invariant Finsler metrics of positive flag curvature. We will also present some information on the invariant Finsler metrics on these coset spaces, which will be of interest in its own right from the point of view of Finsler geometry. By Theorem \ref{thm1}, the list of connected simply connected homogeneous manifolds admitting
invariant Finsler metrics of positive flag curvature coincides
with that of the Riemannian case. Now by the results of Wallach (see \cite{Wallach1972}), the list of pairs $(G, H)$ for the coset spaces $G/H$ is as 
  the following (we only give the pairs of Lie groups which satisfy the assumptions in Theorem \ref{thm2}):
\begin{description}
\item{(1)} The Riemannian symmetric pairs $(G,H)$ of compact type of rank one;
\item{(2)}  The pair $(\mathrm{Sp}(n), \mathrm{Sp}(n-1)\mathrm{U}(1))$, where $n\geq 2$;
\item{(3)} The pair $(G_2, \mathrm{SU}(3))$;
\item{(4)}  The pair $(\mathrm{SU}(3), T^2)$, where $T^2$ is a maximal torus of $\mathrm{SU}(3)$;
\item{(5)} The pair $(\mathrm{Sp}(3), \mathrm{Sp}(1)\times \mathrm{Sp}(1)\times \mathrm{Sp}(1))$.
\item{(6)} The pair $(\mathrm{F}_4, \mathrm{Spin}(8))$.
\end{description}

Now we give a case by case study of invariant Finsler metrics on coset spaces corresponding to the above coset pairs.

(1)\quad The Riemannian symmetric pairs $(G,H)$ of compact type of rank one. Note that in this case the isotropy subgroup $H$ acts transitively on the unit sphere of $T_o(G/H)$, where $o=eH$ is the origin, with respect to standard Riemannian metric. Then any $G$-invariant Finsler metric on $G/H$ must be a positive multiple of the standard Riemannian metric.

(2)\quad The pair $(\mathrm{Sp}(n), \mathrm{Sp}(n-1)\mathrm{U}(1))$, where $n\geq 2$. A description of the isotropy representation of the coset space is presented in \cite{ZI}.
The tangent space $T_o(G/H)$ can be decomposed as $T_o(G/H)=\mathbb{R}^2\oplus \mathbb{H}^{n-1}$.
The subgroup $\mathrm{Sp}(n-1)$ of $H$ acts trivially on $\mathrm{R}^2$, and the action of $\mathrm{U}(1)$ on $\mathrm{R}^2$ is the standard rotation action. Meanwhile, the action of $H$ on
$\mathbb{H}^{n-1}$ is $(A, z)(v)=A(v)\bar{z}$. Therefore the subspaces $\mathbb{R}^2$ and $\mathbb{H}^{n-1}$ are both invariant under $H$ and the action of $H$ is both transitively on
the unit sphere of $\mathbb{R}^2$ as well as that of $\mathbb{H}^{n-1}$ with respect to the standard metrics. As pointed out by Onis$\hat{\rm c}$ik \cite{ON66}, the coset space $\mathrm{Sp}(n) / \mathrm{Sp}(n-1)\mathrm{U}(1)$ is the complex projective space $\mathbb{C}\mathrm{P}^{2n-1}$. The standard metric of $\mathbb{C}\mathrm{P}^{2n-1}$ corresponds
to the inner product
$$\langle\cdot, \cdot\rangle=\langle\cdot,\cdot\rangle_{\mathbb{R}^2}+\langle \cdot,\cdot\rangle_{\mathbb{H}^{n-1}},$$
where $\langle\cdot,\cdot\rangle_{\mathbb{R}^2}$ (resp. $ \langle \cdot,\cdot\rangle_{\mathbb{H}^{n-1}}$) is the standard inner product on $\mathbb{R}^2$ (resp. $\mathbb{H}^{n-1}$). The general form of an invariant Riemannian metric on $\mathrm{Sp}(n) / \mathrm{Sp}(n-1)\mathrm{U}(1)$ is induced by the inner product on $\mathfrak{m}$ of the form
$$\langle\cdot, \cdot\rangle_{(t_1,t_2)}=t_1\langle\cdot,\cdot\rangle_{\mathbb{R}^2}+t_2\langle \cdot,\cdot\rangle_{\mathbb{H}^{n-1}},$$
where $t_1, t_2$ are positive real numbers. By the continuity, if the ratio ${t_1}/{t_2}$ is close enough to $1$, then the corresponding Riemannian metric has positive curvature.
On the other hand, let $F$ be an invariant Finsler metric on $\mathrm{Sp}(n) / \mathrm{Sp}(n-1)\mathrm{U}(1)$. Denote the corresponding Minkowski norm on $\mathfrak{m}$
also as $F$. Then by the transitivity indicated above, the restriction of $F$ in $\mathbb{R}^2$ (resp. $\mathrm{H}^{n-1}$) must be a positive multiple of $\langle\cdot,\cdot\rangle_{\mathbb{R}^2}$ (resp. $\langle \cdot,\cdot\rangle_{\mathbb{H}^{n-1}}$). Therefore $F$ must be of the form
$F=\sqrt{L(\alpha_1,\alpha_2)}$, where $L$ is a homogeneous  positive smooth function  of degree one satisfying some appropriate conditions, and $\alpha_1, \alpha_2$ are quadratic function on $\mathfrak{m}$ defined by
$$\alpha_1 (X, Y)=\langle X, X\rangle_{\mathbb{R}^2},\, \alpha_2 (X, Y)=\langle Y, Y\rangle_{\mathbb{H}^{n-1}},\quad (X,Y)\in \mathfrak{m}.$$
The corresponding metric is exactly an $(\alpha_1,\alpha_2)$-metric introduced by the first two authors in \cite{XuDeng2014}. By the continuity of the flag curvature on the Finsler metric, it is easily seen that if $F$
is close enough to the standard Riemannian metric, then $F$ has positive flag curvature.
For example, consider a family of Minkowski norm $F_\varepsilon$ on $\mathfrak{m}$ defined by
$$
F_\varepsilon(X,Y)=\sqrt{\alpha_1(X,Y)+\alpha_2(X,Y)+\varepsilon \sqrt{\alpha_1(X,Y)^2+\alpha_2(X,Y)^2}},\quad (X,Y)\in\mathfrak{m},
$$
where $\varepsilon$ is a positive real number. Then the corresponding Finsler metric
must have positive flag curvature when  $\varepsilon$ is small enough.

(3)\quad The pair $(G_2, \mathrm{SU}(3))$. Although this  is not a  symmetric pair,
the isotropic representation is transitive on the unit sphere of the tangent space at the origin
of $S^6=G_2/\mathrm{SU}(3)$. Any homogeneous metric $F$ on it must be the standard Riemannian metric of positive constant sectional curvature.

(4)-(6)\quad N. R. Wallach has shown in \cite{Wallach1972} that, for each
pair $(G,H)$ in these cases, we have an $\mathrm{Ad}(H)$-invariant decomposition $\mathfrak{m}=\mathfrak{m}_1+\mathfrak{m}_2+\mathfrak{m}_3$. The dimension
of each $\mathfrak{m}_i$ is 2 for $(\mathrm{SU}(3),T^2)$, 4 for
$(\mathrm{Sp}(3), \mathrm{Sp}(1)\times \mathrm{Sp}(1)\times \mathrm{Sp}(1))$,
and 8 for $(\mathrm{F}_4, \mathrm{Spin}(8))$. Let $\alpha_i^2$ be the
bi-linear function defined by the
restriction of the bi-invariant inner product to each $\mathfrak{m}_i$,
and naturally viewed as bi-linear functions on $\mathfrak{m}$.
N. R. Wallach has given a triple $(t_1,t_2,t_3)$ (there are infinitely many these triples), such that
the inner product $t_1\alpha_1^2+t_2\alpha_2^2+t_3\alpha_3^2$
on $\mathfrak{m}$ defines a positively curved Riemannian homogeneous metric $F_0$ on $G/H$. In Finsler geometry, we have infinitely many way to perturb
$F_0$ in a non-Riemannian manner. For example the Minkowski norm
$$F_\varepsilon=\sqrt{t_1\alpha_1^2+t_2\alpha_2^2+t_3\alpha_3^2+
\varepsilon\sqrt{\alpha_1^2+\alpha_2^2+\alpha_3^2}},$$
defines a non-Riemannian homogeneous metric $F_\epsilon$ on $G/H$.
When the positive parameter $\varepsilon$ is sufficiently close to 0, it is
positively curved. The metrics $F_\varepsilon$ we constructed here are $(\alpha_1,\alpha_2,\alpha_3)$-metrics (see Subsection 2.1), but there are
much more complicated ways to perturb the homogeneous metric $F_0$, which
can result much more homogeneous Finsler metrics on $G/H$ with positive flag curvature.

The proof of Theorem \ref{thm2} is now completed.

 Finally, let us remark that the assumptions that $G$ is compact connected simply connected, $H$ is connected, and $\mathfrak{h}$ does not contain any nonzero ideal
of $\mathfrak{g}$ does not prevent us from using Theorem \ref{thm2} to describe general
homogeneous Finsler spaces of even dimension admitting positive flag curvature. In fact, changing
$G$ to its simply connected covering group, changing $H$ to its identity component, or
cancelling the common product factor from both does not affect the local geometry of
homogeneous Finsler metric in the consideration. Furthermore,  assuming that $\mathfrak{h}$
does not contain any nonzero ideal of $\mathfrak{g}$, and the $G$-homogeneous Finsler metric $F$ on the even dimensional coset space $G/H$ is positively curved, then $G$ can not have a positive
dimensional center. Otherwise there is nonzero center vector of $\mathfrak{g}$ contained in $\mathfrak{h}$, which is a contradiction to our assumption.
Thus $G$ is semisimple and  has a compact connected simply connected covering group. So any even dimensional positively curved homogeneous space $G/H$
must be locally isometric to one in the list of Theorem \ref{thm2}.

\end{document}